\newtheorem{theorem}{Theorem}[section]
\theoremstyle{plain}
\newtheorem{corollary}{Corollary}[section]
\newtheorem{lemma}{Lemma}[section]
\newtheorem{proposition}{Proposition}[section]
\numberwithin{equation}{section}
\theoremstyle{definition}
\theoremstyle{remark}
\newtheorem{remark}{Remark}[section]
\def\pd{\partial}
\def\re{\mathbb{R}}
\def\mbb{\mathbb}
\newcommand{\eqal}[1]{\begin{equation}\begin{aligned}#1\end{aligned}\end{equation}}
\newcommand{\osc}{\mathrm{osc}}
\newcommand{\Dg}{\Delta_g}
\newcommand{\dg}{\nabla_g}
\title[Singularities of the LMCF]{Singularities of the Lagrangian mean curvature flow at the critical Lagrangian phase
}
\author{Arunima Bhattacharya, Ravi Shankar, Jeremy Wall, and Diego Yepez}
\address{Department of Mathematics, Phillips Hall\\
 the University of North Carolina at Chapel Hill, NC }
\email{arunimab@unc.edu}
\address{Department of Mathematics, Fine Hall\\
Princeton University, Princeton, NJ}
\email{rs1838@princeton.edu}
\address{Department of Mathematics, Phillips Hall\\
 the University of North Carolina at Chapel Hill, NC }
\email{jwall2@unc.edu}
\address{Department of Mathematics, Fine Hall\\
Princeton University, Princeton, NJ}
\email{dy9534@princeton.edu}
\begin{document}

\begin{abstract}
We establish interior estimates for singularities of the Lagrangian mean curvature flow when the Lagrangian phase is critical, i.e., $|\Theta|\geq (n-2)\tfrac{\pi}{2}$, and extend our results to the broader class of Lagrangian mean curvature type equations. Our gradient estimates require certain structural conditions, and we construct $C^{\alpha}$ singular viscosity solutions to show that criticality of the phase is necessary, and that these conditions cannot be removed in dimension one. We also introduce a new method for proving $C^{2,\alpha}$ estimates by exponentiating the arctangent operator into a concave one when $|\Theta|\geq (n-2)\tfrac{\pi}{2}$ and $n>2$.
\end{abstract}

\maketitle

\section{Introduction}
We say that $u$ solves a \textit{Lagrangian mean curvature type equation} in the complex Euclidean space when
\begin{equation}
\sum_{i=1}^n\arctan\lambda_i=\Theta(x,u,Du)\in(-n\frac{\pi}{2},n\frac{\pi}{2}), \label{slag}
\end{equation} where $\lambda_i$'s are the eigenvalues of the Hessian of $u$. The quantity $\Theta$, determined by

\begin{equation}
    \Theta=\sum_{i=1}^n\arctan\lambda_i \label{sl},
\end{equation}
is called the Lagrangian phase or angle of the Lagrangian submanifold $L=(x,Du(x))\subset \mathbb C^n$. By Harvey-Lawson \cite{HL}, the Lagrangian phase determines the mean curvature vector of $L$:
\begin{equation}\label{mc}
    \vec H=J\nabla_g\Theta,
\end{equation}
where $\nabla_g$ is the gradient operator for the induced metric $g_{ij}=\delta_{ij}+u_{ik}\delta^{kl}u_{lk}$ on $L$, and $J$ is the $\frac{\pi}{2}$ rotation matrix in $\mathbb{C}^n$.  When the Lagrangian phase is constant, i.e., $\Theta=c$, one gets the \textit{special Lagrangian equation}. In this case, $H=0$, and $L$ is a volume-minimizing submanifold in $\mathbb C^n$.
A dual form of the special Lagrangian equation is the Monge-Amp\'ere equation $\det D^2u=c$.
This is the potential equation for special Lagrangian submanifolds in $(\mathbb {R}^n\times \mathbb {R}^n, dxdy)$ as interpreted by Hitchin in \cite{Hi}.

We say that $u$ solves the potential equation for prescribed \textit{Lagrangian mean curvature}  when

    \begin{equation*}
   \sum _{i=1}^{n}\arctan \lambda_{i}=\Theta(x). 
\end{equation*}

 A family of Lagrangian submanifolds $X(x,t):\re^n\times\re\to\mbb C^n$ evolves by \textit{Lagrangian mean curvature flow} (LMCF) if it solves
\[(X_t)^\bot=\Delta_gX=\vec H.
\]
After a change of coordinates, one can locally write $X(x,t)=(x,Du(x,t))$, such that $\Delta_gX=(J\bar\nabla\Theta(x,t))^\bot$.  This means a local potential $u(x,t)$ evolves by the parabolic equation
\begin{align*}
    u_t&=\sum_{i=1}^n\arctan\lambda_i,\\
    &u(x,0):=u(x).
\end{align*}

Symmetry reductions of the Lagrangian mean curvature flow reduce the above local parabolic representation to an elliptic equation for $u(x)$, which models singularities of the mean curvature flow (see Chau-Chen-He \cite{CCH}). If $u(x)$ solves
\eqal{
\label{s}
\sum_{i=1}^n\arctan\lambda_i=s_1+s_2(x\cdot Du(x)-2u(x)),
}
then $X(x,t)=\sqrt{1-2s_2t}\,(x,Du(x))$ is a \textit{shrinker} or \textit{expander} solution of the LMCF, if $s_2>0$ or $s_2<0$ respectively.  If $u(x)$ solves
\eqal{
\label{tran}
\sum_{i=1}^n\arctan\lambda_i=\gamma_1+\gamma_2\cdot x+\gamma_3\cdot Du(x),
}
then $X(x,t)=(x,Du(x))+t(-\gamma_3,\gamma_2)$ is a \textit{translator} solution of the LMCF. If $u(x)$ solves
\eqal{
\label{rotator}
\sum_{i=1}^n\arctan\lambda_i=r_1+\frac{r_2}{2}(|x|^2+|Du(x)|^2),
}
then $X(x,t)=\exp(r_2tJ)(x,Du(x))$ is a \textit{rotator} solution of the LMCF.

\subsection*{Notation }Before we present our main results, we clarify some terminology.
\begin{itemize}
\item[I.] By $B_R$ we denote a ball of radius $R$ centered at the origin.
\item[II.] We denote the oscillation of $u$ in $B_R$ by $\osc_{B_R}(u)$.
\item[III.] Let $\Gamma_R = B_R\times u(B_R)\times Du(B_R)\subset B_R\times\re\times\re^n$. Let $\nu_1,\nu_2$ be constants such that for $\Theta(x,z,p)$, we assume the following structure conditions
\begin{align}
    |\Theta_x|,|\Theta_z|,|\Theta_p|&\leq \nu_1(\osc_{B_R} u, \|Du\|_{L^\infty(B_R)}),\label{struct}\\
    |\Theta_{xx}|,|\Theta_{xz}|,|\Theta_{xp}|,|\Theta_{zz}|,|\Theta_{zp}| &\leq \nu_2(\osc_{B_R} u, \|Du\|_{L^\infty(B_R)}), \nonumber
\end{align}
for all $(x,z,p)\in\Gamma_R$. In the above partial derivatives, the variables $x,z,p$ are treated as independent of each other. Observe that this indicates that the above partial derivatives do not have any $D^2u$ or $D^3u$ terms.
 \end{itemize}
\medskip

Our main results are the following.

\begin{theorem}[Hessian Estimates]\label{main1}
If $u$ is a smooth solution of \eqref{slag} on $B_R\subset\re^n$, with $n\geq 3$ and $|\Theta|\geq (n-2)\frac{\pi}{2}$ where $\Theta(x,z,p)\in C^2(\Gamma_R)$ satisfies \eqref{struct} and is partially convex in $p$, then we get
\[
|D^2u(0)|\leq C_1\exp{[C_2\|Du\|_{L^\infty(B_{R})}^{2n+3}/R^{2n+3}]}
\]
where $C_1$ and $C_2$ are positive constants depending on $\nu_1,\nu_2$, and $n$.
\end{theorem}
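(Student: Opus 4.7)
The plan is to derive a Jacobi-type differential inequality for the test function $b := \log\sqrt{1+\lambda_{\max}(D^2u)^2}$ on the Lagrangian submanifold $L = \{(x,Du(x))\} \subset \mathbb{C}^n$, then to run a Moser iteration via the Michael--Simon--Sobolev inequality on $L$ to promote an integral bound to pointwise control of $b$ at the origin. Since $b$ is comparable to $\log|D^2u|$ when the Hessian is large, an $L^\infty$ bound on $b$ is equivalent to the desired Hessian estimate.

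First I would differentiate \eqref{slag} once and twice in a frame that diagonalizes $D^2u$ at the point of interest, and express $\nabla_g b$ and $\Delta_g b$ in terms of the second fundamental form of $L$ together with derivatives of $\Theta$. The critical phase condition $|\Theta|\geq(n-2)\tfrac{\pi}{2}$ is equivalent to the algebraic fact (after Yuan) that the ordered eigenvalues of $D^2u$ satisfy $\lambda_i+\lambda_j\geq 0$ for all $i\neq j$; this positivity is precisely what gives the ``off-diagonal'' pieces of $|h|^2$ the right sign in $\Delta_g b$, yielding heuristically
\[
\Delta_g b \;\geq\; c\,|\nabla_g b|^2 \;-\; (\text{error involving } D\Theta\text{ and }D^2\Theta).
\]
The structural bound \eqref{struct} controls the error terms coming from $\Theta_{xx},\Theta_{xz},\Theta_{xp},\Theta_{zz},\Theta_{zp}$, which are free of $D^2u$ and $D^3u$ and hence dominated by $\nu_2$. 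The only genuinely dangerous contribution is $\Theta_{pp}$ paired with products of third derivatives of $u$; partial convexity of $\Theta$ in $p$ assigns this term the sign needed for it to be absorbed or dropped.

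With the Jacobi inequality in hand, I would test it against $\eta^2 b^{2q-1}$ for $q\geq 1$, where $\eta$ is a cutoff supported in a Euclidean ball in $\mathbb{C}^n$ (restricted to $L$), integrate by parts intrinsically on $L$, and apply the Michael--Simon--Sobolev inequality
\[
\Bigl(\int_L f^{\tfrac{n}{n-1}}\,d\mathrm{vol}_g\Bigr)^{\tfrac{n-1}{n}} \;\leq\; C\int_L\bigl(|\nabla_g f|+f\,|\vec H|\bigr)\,d\mathrm{vol}_g.
\]
By \eqref{mc} the mean curvature is $\vec H=J\nabla_g\Theta$, and \eqref{struct} bounds $|\vec H|$ by $\nu_1$. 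A standard iteration then promotes an $L^2$ bound on $b$ (with cutoff) to an $L^\infty$ bound at the origin.

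The main obstacle is the algebraic bookkeeping in the Jacobi computation: one must verify that partial convexity of $\Theta$ in $p$ exactly cancels the sign-indefinite cubic terms arising from $\Theta_{pp}$ contracted with third derivatives of $u$, without which the Moser scheme breaks down. Once the inequality is established, extracting the exponent $2n+3$ is a matter of combining (i) the powers of $R$ lost from cutoffs, (ii) the cost of bounding $\int_L \eta^2\,d\mathrm{vol}_g$ by the volume of the Lagrangian graph over $B_R$, which contributes a factor polynomial in $\|Du\|_{L^\infty(B_R)}/R$, and (iii) the number of iterations needed to pass from $L^2$ to $L^\infty$. Optimizing these powers should yield the stated form $C_1\exp[C_2\|Du\|_{L^\infty(B_R)}^{2n+3}/R^{2n+3}]$.
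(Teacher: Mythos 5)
Your overall framework---a Jacobi-type inequality for $b=\log\sqrt{1+\lambda_{\max}^2}$ on the gradient graph, valid at critical phase because the ordered eigenvalues satisfy $\lambda_i+\lambda_j\ge 0$ off the diagonal, followed by the Michael--Simon Sobolev inequality and an iteration scheme on the submanifold---is the same as the paper's, which packages the iteration into a mean value inequality applied to $(b-2c_0)^{n/(n-2)}$ after a single application of the Sobolev inequality. You also correctly locate the role of partial convexity of $\Theta$ in $p$: in the second differentiation of \eqref{slag} it controls the sign of the $\Theta_{p_ip_j}\lambda_i\lambda_j$ contribution to $\Delta_g b$.

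The genuine gap is in your treatment of the mean curvature term, which is precisely the new difficulty of this setting. At a point where $D^2u$ is diagonalized, $\partial_k\Theta=\Theta_{x_k}+\Theta_z u_k+\Theta_{p_k}\lambda_k$, so
$|\nabla_g\Theta|^2=\sum_j\tfrac{1}{1+\lambda_j^2}\bigl(\Theta_{x_j}+\Theta_z u_j+\Theta_{p_j}\lambda_j\bigr)^2$,
whose worst piece is of size $\nu_1^2\sum_j\tfrac{\lambda_j^2}{1+\lambda_j^2}$. This is pointwise bounded (so $|\vec H|$ is indeed controlled, as you say), but once it is inserted into the Sobolev term $\int f^2|\vec H|^2\,dv_g$ with $f=\phi\sqrt{b}$ (or $\phi\, b^{q}$ in your iteration) it produces $\int\phi^2 b\sum_j\tfrac{\lambda_j^2}{1+\lambda_j^2}\,dv_g\sim\int\phi^2 b\,dv_g$, i.e.\ the full $b$-weighted volume of the graph. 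That quantity is \emph{not} reachable by the conformality identity, which only expresses $\sum_j\tfrac{V}{1+\lambda_j^2}$ (not $V$ itself) as a combination of $\sigma_0,\dots,\sigma_{n-1}$ with divergence structure; so your claim that the graph volume over $B_R$ contributes ``a factor polynomial in $\|Du\|_{L^\infty}/R$'' and that the rest is exponent bookkeeping does not go through as stated. The paper closes this with Ding's device: since $\Delta_g u_j=g^{jk}\partial_k\Theta$, integrating $\int b\phi^2u_j\Delta_g u_j\,dv_g$ by parts converts $\int\phi^2 b\,|\nabla_g u_j|^2\,dv_g=\int\phi^2 b\,\tfrac{\lambda_j^2}{1+\lambda_j^2}\,dv_g$ into terms involving only $\int\phi^2 b\,\tfrac{1}{1+\lambda_j^2}\,dv_g$, $\int\phi^2|\nabla_g b|^2\,dv_g$, and cutoff errors, at the cost of extra powers of $\|Du\|_{L^\infty}$---and it is exactly this cost, compounded through the $\sigma_k$ induction, that yields the exponent $2n+3$. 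Without this step (or an equivalent substitute) your Moser scheme does not close.
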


\begin{remark}\label{sing}
    From the singular solutions constructed in \cite[(1.13)]{BS2}, it is clear that the Hessian estimates stated in Theorem \ref{main1} fail to hold in the absence of partial convexity of $\Theta$ with respect to the gradient variable $Du$.
\end{remark}

\begin{corollary}\label{main0}
If $u$ is a smooth solution of any of these equations: \eqref{s}, \eqref{tran}, and \eqref{rotator} on $B_{R}\subset \mathbb{R}^{n}$ where  $n\geq 3$ and $|\Theta|\geq (n-2)\frac{\pi}{2}$, then we obtain 
\[
|D^2u(0)|\leq C_1\exp{[C_2\|Du\|_{L^\infty(B_{R})}^{2n+5}/R^{2n+5}]}
\]
where $C_1$ and $C_2$ are positive constants depending on $n$ and the following: $s_2$ for \eqref{s}, $\gamma_2,\gamma_3$ for \eqref{tran}, and $r_2$ for \eqref{rotator}.
\end{corollary}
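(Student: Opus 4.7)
The plan is to realize each of \eqref{s}, \eqref{tran}, \eqref{rotator} as a special case of \eqref{slag} by writing down the corresponding $\Theta(x,z,p)$, verify the hypotheses of Theorem \ref{main1} (the structure conditions \eqref{struct} and partial convexity of $\Theta$ in $p$), apply the theorem, and finally absorb the polynomial parameter-dependence into the exponent to put the bound in the stated form.

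For \eqref{s} I would take $\Theta(x,z,p) = s_1 + s_2(x\cdot p - 2z)$; for \eqref{tran}, $\Theta(x,z,p) = \gamma_1 + \gamma_2 \cdot x + \gamma_3 \cdot p$; and for \eqref{rotator}, $\Theta(x,z,p) = r_1 + \tfrac{r_2}{2}(|x|^2 + |p|^2)$. In the first two cases $\Theta$ is linear in $p$, so partial convexity in $p$ is automatic. For the rotator, $\Theta_{pp} = r_2 I$, which is partially convex when $r_2 \geq 0$; when $r_2 < 0$ I would replace $u$ by $-u$, which yields an equation of the same rotator form with $\tilde r_2 = -r_2 > 0$, preserves the phase bound $|\Theta| \geq (n-2)\pi/2$, and preserves $|D^2u|$ pointwise.

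The first partials $\Theta_x, \Theta_z, \Theta_p$ in each case are constant or linear in $(x,p)$, so on $\Gamma_R$ they are controlled by $R$, $\|Du\|_{L^\infty(B_R)}$, and the equation parameters; the second partials appearing in \eqref{struct} are zero or constant multiples of the identity. Using $\osc_{B_R} u \leq 2R\|Du\|_{L^\infty(B_R)}$ to eliminate the oscillation input, I may take $\nu_1, \nu_2 \leq C(1 + R + \|Du\|_{L^\infty(B_R)})^m$ for some fixed $m$ and some $C$ depending only on the equation parameters.

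Theorem \ref{main1} then yields $|D^2u(0)| \leq C_1 \exp[C_2 \|Du\|_{L^\infty(B_R)}^{2n+3}/R^{2n+3}]$, where $C_1, C_2$ depend polynomially on $\nu_1, \nu_2$. The final step is a routine absorption of this polynomial in $R + \|Du\|_{L^\infty(B_R)}$ into the exponential, using (in the only non-trivial regime $\|Du\|_{L^\infty(B_R)} \geq R$) the inequality $R + \|Du\|_{L^\infty(B_R)} \lesssim \|Du\|_{L^\infty(B_R)}^2/R$, which promotes the $2n+3$ in the exponent to $2n+5$ and gives new constants $C_1', C_2'$ depending only on $n$ and the equation parameters. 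The main technical point is tracking the degree of $\nu_i$ in $C_2$ from the proof of Theorem \ref{main1} closely enough to land exactly on the exponent $2n+5$ rather than something larger; all other steps are direct verifications.
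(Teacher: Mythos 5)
Your high-level strategy is the same as the paper's: each of \eqref{s}, \eqref{tran}, \eqref{rotator} is treated as a special case of the framework of Theorem \ref{main1}, with $\Theta(x,z,p)$ given by the explicit expressions you wrote, and the $u\mapsto -u$ reduction for the rotator with $r_2<0$ is a legitimate way to handle partial convexity there (the paper instead sidesteps convexity entirely by invoking the equation-specific Jacobi inequalities of Corollary \ref{singcor}). The verification of \eqref{struct} with $\nu_1\lesssim C(\text{params})(1+\|Du\|_{L^\infty})$ on the normalized ball and $\nu_2\leq C(\text{params})$ is also correct.

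The genuine gap is in your final step. Theorem \ref{main1} gives constants $C_1,C_2$ with \emph{unspecified} dependence on $\nu_1,\nu_2$, so you cannot conclude anything quantitative by substituting $\nu_1\sim 1+\|Du\|_{L^\infty}$ into its statement; you must reopen the proof. When you do, the mechanism that produces exactly two extra powers is concrete: the pointwise Jacobi constant is $C(n,\nu_1,\nu_2)(1+|Du|^2)$, and for these three equations it is shown (Corollary \ref{singcor}, combining Corollaries 3.1--3.3 of \cite{BWall2}) to be $C(n,\text{param})(1+|Du|^2)$ with the parameter dependence genuinely constant; likewise the factors $C(\nu_1)$ arising from squaring $\nabla_g\Theta$ and $|\vec H|$ in Steps 1--2 are quadratic in $\nu_1$, hence contribute $\|Du\|^2_{L^\infty}$. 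Re-running Steps 1--4 with these explicit constants turns the final bound $(b-2c_0)(0)\leq C(1+\|Du\|^2)[\|Du\|^{2n+1}+1]$ of the theorem into one with an extra factor of $\|Du\|^2$, i.e.\ exponent $2n+5$ (and explains why the translator, whose $\nu_1$ is a true constant, keeps $2n+3$, per the remark following the corollary). Your proposed absorption via $R+\|Du\|_{L^\infty}\lesssim \|Du\|_{L^\infty}^2/R$ is not the right bookkeeping: it would raise the exponent by $2m$ for an undetermined degree $m$ and would also distort the power of $R$ in the denominator, so it does not land on the scale-invariant quantity $\|Du\|^{2n+5}_{L^\infty}/R^{2n+5}$. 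In short, the missing content is precisely the degree-tracking you deferred; without it the exponent $2n+5$ is not established.
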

\begin{remark}
    In the case of \eqref{tran}, we get a better estimate
    \[
    |D^2u(0)|\leq C_1\exp{[C_2\|Du\|_{L^\infty(B_{R})}^{2n+3}/R^{2n+3}]}.
    \]
\end{remark}

Next, our gradient estimates are as follows. 

\begin{theorem}[Gradient Estimates]\label{grad}
    Let $u$ be a smooth solution of \eqref{slag} on $B_1\subset\re^n$ with $|\Theta|\geq (n-2)\frac{\pi}{2}$. Suppose that $\Theta(x,z,p)$ satisfies the following conditions:
    \begin{itemize}
        \item[(a)] $\Theta(x,z,p)$ is uniformly $C^2$ in $x$, i.e., $|\Theta_{xx}|\leq \|\Theta_{xx}\|_{L^\infty(\Gamma_1)}$ independently of $u$ and $Du$, and that \begin{equation}\label{eq:interp}
        \lvert D_x\Theta(x,z,p)\rvert \le C\Bigl(\Theta(x,z,p)- \tfrac{(n-2)\pi}{2}\Bigr)^{1/2}.
        \end{equation}
        \item [(b)]$\Theta_z$ is non-negative: \begin{equation}
        D_z\Theta(x,z,p) \;=\; \Theta_z(x,z,p)\;\ge 0. \label{eq:monotone-u}
        \end{equation}
        \item [(c)]$\Theta$ has superlinear decay $o(1/\lvert p\rvert)$ as $p\to\infty$ and $\Theta\to(n-2)\pi/2$, more quantitatively
\begin{equation}\label{eq:decay-p}
\lvert p\rvert\,\lvert D_p\Theta(x,z,p)\rvert \;\le\; C\Bigl(\Theta(x,z,p)- \tfrac{(n-2)\pi}{2}\Bigr).
\end{equation}
    \end{itemize}
    Then $u$ satisfies the following estimate: 
    \begin{equation*}
    |Du(0)|\leq C(n,\|\Theta_{xx}\|_{L^\infty(\Gamma_1)})\left(1+(\text{osc}_{B_1}u)^2\right).
\end{equation*}
\end{theorem}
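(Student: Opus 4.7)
The plan is to prove this estimate by a Bernstein-type maximum principle argument on an auxiliary function combining a power of $|Du|$ with $u$, exploiting the critical phase hypothesis together with the three structural conditions to absorb the nonlinear error terms. Let $F^{ij} := \partial_{u_{ij}}\arctan$ be the linearized coefficients of the operator, so that in the eigenbasis of $D^2u$ one has $F^{ii} = 1/(1+\lambda_i^2)$; write $L := F^{ij}\partial_i\partial_j$ and $T := \sum_i 1/(1+\lambda_i^2)$. Differentiating the equation once in $x_k$ gives
\[
F^{ij} u_{ijk} \;=\; \Theta_{x_k} + \Theta_z\,u_k + \Theta_{p_l}\,u_{lk},
\]
and the standard Bernstein identity then yields
\[
L(|Du|^2) \;=\; 2(n-T) + 2 u_k \Theta_{x_k} + 2\Theta_z|Du|^2 + 2\Theta_{p_l}\, u_k u_{lk}.
\]
Here $n-T\ge 0$ is the driving positive ``spectral'' quantity, while the remaining three terms are the errors to be controlled via the hypotheses.

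Motivated by the quadratic dependence on $\osc_{B_1}u$ in the target bound, I would take an auxiliary function of the form $W(x) = |Du(x)|^{1/2} - K\bigl(u(x) - \inf_{B_1} u\bigr)$ for a suitably large constant $K$, so that a uniform interior bound $W \le C$ at the maximum translates directly into $|Du(0)|^{1/2} \le C + K\,\osc_{B_1}u$, i.e., $|Du(0)| \le C(1+(\osc_{B_1}u)^2)$. At an interior maximum $x_0$ of $W$, the relation $\nabla W(x_0)=0$ provides an algebraic identity for $u_k u_{ki}$ in terms of $Du$ and $K$, while $LW(x_0)\le 0$ combined with the Bernstein identity reduces the problem to a pointwise inequality in which the spectral term $n-T$ is pitted against derivatives of $\Theta$. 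Condition (b), $\Theta_z \ge 0$, gives the correct (favorable) sign for the $\Theta_z|Du|^2$ contribution; condition (a) bounds $|D_x\Theta|$ by $C(\Theta - (n-2)\pi/2)^{1/2}$; and condition (c) controls $|p|\,|D_p\Theta|$ by $C(\Theta - (n-2)\pi/2)$, which, combined with the algebraic relation from $\nabla W(x_0)=0$, tames the $\Theta_{p_l} u_k u_{lk}$ cross-term.

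The critical phase hypothesis $|\Theta|\ge (n-2)\pi/2$ enters through two algebraic facts. First, it forces $D^2u$ to have at most one negative eigenvalue, which is uniformly bounded below, so $T$ is quantitatively bounded away from $n$. Second, and more crucially, it provides a direct algebraic link between the phase gap $\Theta - (n-2)\pi/2$ and the spectral deficit $n-T$, which is what permits the interpolation-controlled errors to be absorbed into the leading positive term. The main obstacle will be establishing this absorption sharply, especially for condition (c): since $p\cdot D_p\Theta$ is a priori of the same order as $|Du|^2$---which is what we are trying to bound---the linear rate in the phase gap must be exploited together with the critical-phase algebra, not merely through Cauchy--Schwarz. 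Once the pointwise inequality at $x_0$ closes, one deduces $|Du(x_0)|^{1/2}\le C(1+\osc_{B_1} u)$, and substituting back into the definition of $W$ yields the claimed bound at the origin.
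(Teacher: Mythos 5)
Your overall strategy (a Bernstein-type maximum principle for a test function mixing $|Du|$ with a multiple of $u$, with the three structural conditions each killing one term of $\partial_k\Theta=\Theta_{x_k}+\Theta_zu_k+\Theta_{p_k}\lambda_k$) is the same as the paper's, but as written the argument has two genuine gaps. First, your auxiliary function $W=|Du|^{1/2}-K(u-\inf_{B_1}u)$ carries no localization: its maximum over $\bar B_1$ may well be attained on $\partial B_1$, where nothing is controlled, and then no interior bound follows. The paper's test function is $w=\eta|Du|+Au^2/2$ with the cutoff $\eta=1-|x|^2$ and $A=3\sqrt n/\osc_{B_1}u$, precisely so that a boundary maximum is harmless.

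Second, and more seriously, you have misidentified the mechanism by which conditions \eqref{eq:interp} and \eqref{eq:decay-p} become useful. Both bound derivative terms by powers of the phase gap $\Theta-(n-2)\tfrac\pi2$, so they only help if that gap is \emph{small} at the maximum point; your claimed ``algebraic link'' between the gap and the spectral deficit $n-T$ does not exist (take all $\lambda_i$ large and positive: then $n-T\approx n$ while the gap is $\approx\pi$), and at the critical phase the negative eigenvalue is \emph{not} uniformly bounded below --- that is the supercritical case. What actually makes the gap small in the paper is the first-order condition at the maximum: the cutoff term $\eta_k|Du|$ together with $Auu_k$ forces a negative eigenvalue with $\eta|\lambda_n|\gtrsim|Du|$, whence $\Theta-(n-2)\tfrac\pi2\le 1/|\lambda_n|$ (the one-negative-eigenvalue inequality of Wang--Yuan), and only then do (a) and (c) yield terms of size $|\lambda_n|^{-1/2}$ and $|\lambda_n|^{-1}$ that the positive term $Ag^{ij}u_iu_j\sim A\eta|Du|/|\lambda_n|$ can absorb. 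Your $W$ produces the opposite sign: $\nabla W(x_0)=0$ gives $\lambda_i=2K|Du|^{3/2}>0$ in the gradient directions, so no large negative eigenvalue, no smallness of the phase gap, and the $\Theta_x$ and $\Theta_p$ terms are then only bounded by constants times $|Du|$, which cannot be absorbed. To repair the proof you should adopt a cutoff-weighted test function so that the first-order condition manufactures the large negative eigenvalue, and run the absorption through $1/|\lambda_n|$ rather than through $n-T$.
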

\medskip

\begin{remark}
    In Section \ref{Sec_sing_soln}, we construct singular $C^{\alpha}$ viscosity solutions to show that none of the conditions (a), (b), and (c) can be removed in dimension one.
\end{remark}

For solutions of the special Lagrangian equation with critical and supercritical phase $|\Theta|\geq (n-2)\frac{\pi}{2}$, Hessian estimates were obtained by Warren-Yuan \cite{WY9,WY}, Wang-Yuan \cite{WaY}, Li \cite{Lcomp} via a compactness approach, Shankar \cite{shankar2024hessian} via a doubling approach, and Zhou \cite{ZhouHess} for equations requiring Hessian constraints which generalize criticality. The singular $C^{1,\alpha}$ solutions to \eqref{sl} constructed by Nadirashvili-Vl\u{a}du\c{t} \cite{NV} and Wang-Yuan \cite{WdY} show that interior regularity is not possible for subcritical phases $|\Theta|<(n-2)\frac{\pi}{2}$, without an additional convexity condition, as shown in Bao-Chen \cite{BCconvex}, Chen-Warren-Yuan \cite{CWY}, and Chen-Shankar-Yuan \cite{CSY}, and that the Dirichlet problem is not classically solvable for arbitrary smooth boundary data. Recently, viscosity solutions to \eqref{sl} that are Lipschitz but not $C^1$ were constructed by Mooney-Savin \cite{MooneySavin}.

For solutions of the Lagrangian mean curvature equation, Hessian estimates for convex smooth solutions with $C^{1,1}$ phase were obtained by Warren in \cite[Theorem 8]{WTh}. For $C^{1,1}$ critical and supercritical phase, interior Hessian and gradient estimates were established by Bhattacharya \cite{AB, AB2d} and Bhattacharya-Mooney-Shankar \cite{BMS} (for $C^2$ phase), respectively. See also Lu \cite{Siyuan}. Interior Hessian estimates for supercritical $C^{0,1}$ phase were derived by Zhou \cite{Zhou1}. Recently, interior Hessian estimates for critical and supercritical $C^{0,1}$ phases were proven by Ding \cite{Ding1}. For convex viscosity solutions, interior regularity was established for $C^2$ phase by Bhattacharya-Shankar in \cite{BS1} and optimal regularity for the broader class of equations, given by \eqref{slag} was derived in \cite{BS2}. If $\Theta$ is merely in $C^{\alpha}$ and supercritical, $|\Theta|\geq (n-2)\frac{\pi}{2}+\delta$, counterexamples to Hessian estimates exist as shown in \cite{AB1}. For more work on the special Lagrangian and Lagrangian mean curvature type equations, we refer the reader to \cite{CaoWang,WHB24,QZ24,LiuBao}.

Bhattacharya-Wall obtained a priori Hessian bounds for singularities of the LMCF and for the broader class of equations \eqref{slag}: first in the case of convex potentials \cite{BWall1}, and later for supercritical phases \cite{BWall2}. In the supercritical range, one has $D^2u \geq -\cot\delta I_n$, which permits a Lewy-Yuan rotation \cite[p.122]{YY}. The rotation yields a uniformly elliptic Jacobi inequality on the rotated graph, akin to \cite{CWY}, enabling the application of the local maximum principle \cite[Theorem 9.20]{GT}. At the critical phase ($\delta=0$), however, this rotation technique fails. 

\medskip

Let us give a brief overview of the methods used in our proofs. To prove gradient estimates, we follow the approach of \cite{BMS}. Since the phase depends on $(x, u(x), Du(x))$, the appearance of terms involving the gradient of the phase prevents direct application of earlier methods. We treat each gradient contribution separately and derive structural conditions under which a gradient estimate holds. We further construct $C^\alpha$ singular viscosity solutions demonstrating that none of the conditions can be removed in dimension one, and that the criticality condition on the phase is necessary in higher dimensions.

It is interesting to note that despite the fact that the Lagrangian angles of these solutions can be interpreted as smooth functions on the ambient space $\re^n\times\re^n$, these viscosity solutions have no geometrical interpretation as Lagrangian submanifolds, and the mean curvature is not bounded.  This is in contrast to the $C^{1,1}$ Lagrangian submanifolds of the Lipschitz viscosity solutions in \cite{MooneySavin}, and the analytic submanifolds of the $C^{1,\alpha}$ viscosity solutions in \cite{NV,WdY}.  There are non-Dini viscosity solutions for the complex Monge-Amp\`ere equation with constant coefficients in \cite{WW}.

The main difficulty in deriving Hessian estimates for \eqref{slag} arises from the use of the Michael-Simon Sobolev inequality \cite[Theorem 2.1]{MS}. Because $\Theta$ depends on the gradient, substituting the mean curvature term into the Sobolev inequality introduces additional Hessian-dependent terms. To control these, we use an idea of \cite{Ding1}, which removes the Hessian dependence at the cost of introducing higher-order dependence on the gradient.

Once the Hessian is bounded, the main obstruction to obtaining $C^{2,\alpha}$ estimates arises from the lack of concavity of the arctangent operator and the dependence of the phase on the potential and its gradient. To overcome this, we develop a new technique that exponentiates the arctangent operator at the critical value, exploiting the boundedness of the Hessian and the assumption that $n \geq 3$. Note that this approach had previously been available only for the supercritical phase, where the exponentiation method of \cite{CPW, CW2} relied on $\delta > 0$, and no such method was known at the critical phase ($\delta = 0$). While applicable in higher dimensions $n\ge 3$, Yuan’s counterexamples \cite{Ycount_notes} show that the technique fails in dimension two.  In this situation, an upwards rotation employed in Bhattacharya-Ogden \cite{BOHamstat} can yield the estimate. We conclude by presenting alternative approaches to derive the $C^{2,\alpha}$ estimate in Remark \ref{C2a}.

\subsection*{Organization} The paper is organized as follows. In Section \ref{sec-prel}, we introduce notation and recall some past results. In Section \ref{sec-grad}, we prove gradient estimates, followed by Section \ref{Sec_sing_soln}, where we construct singular $C^\alpha$ viscosity solutions to prove the necessity of the conditions imposed to prove the gradient estimates. In Section \ref{sec-Hess}, we prove Hessian estimates, and lastly in Section \ref{sec-c2alpha}, we prove $C^{2,\alpha}$ estimates.

\subsection*{Acknowledgments} AB is grateful to Yu Yuan for pointing out that the exponentiation technique described in Lemma \ref{Expo} fails in two dimensions.
AB acknowledges
the support of NSF grant DMS-2350290, the Simons Foundation grant MPS-TSM-00002933, and a Bill Guthridge fellowship from UNC-Chapel Hill. JW acknowledges
the support of NSF RTG grant DMS-2135998 and NSF grant DMS-2350290.

\section{Preliminaries}\label{sec-prel}

For the convenience of the readers, we recall some preliminary results. We first introduce some notations that will be used in this paper.
The induced Riemannian metric on the Lagrangian submanifold $L=(x,Du(x))\subset \mathbb{R}^n\times\mathbb{R}^n$ is given by
\[g=I_n+(D^2u)^2 .
\]
We denote
 \begin{align*} 
    \partial_i=\frac{\partial}{\partial x_i} \text{ , }
     \partial_{ij}=\frac{\partial^2}{\partial x_i\partial x_j} \text{ , }
     u_i=\partial_iu \text{ , }
    u_{ij}=\partial_{ij}u.
    \end{align*}
  Note that for the functions defined below, the subscripts on the left do not represent partial derivatives\begin{align*}
    h_{ijk}=\sqrt{g^{ii}}\sqrt{g^{jj}}\sqrt{g^{kk}}u_{ijk},\quad
    g^{ii}=\frac{1}{1+\lambda_i^2}.
    \end{align*}
Here $(g^{ij})$ is the inverse of the matrix $g$ and $h_{ijk}$ denotes the second fundamental form when the Hessian of $u$ is diagonalized.
The volume form, gradient, and inner product with respect to the metric $g$ are given by
\begin{align*}
    dv_g=\sqrt{\det g}dx &= Vdx \text{ , }\qquad
    \nabla_g v=g^{ij}v_iL_j,\\
    \langle\nabla_gv,\nabla_g w\rangle_g &=g^{ij}v_iw_j \text{ , }\quad
    |\nabla_gv|^2=\langle\nabla_gv,\nabla_g v\rangle_g.
\end{align*}

We recall the following lemma. 
\begin{lemma}\cite[Lemma 2.2]{WaY}\label{y1}
		Suppose that the ordered real numbers $\lambda_{1}\geq \lambda_{2}\geq...\geq \lambda_{n}$ satisfy \eqref{sl} with $\Theta\geq (n-2)\frac{\pi}{2}$. 
		Then we get \begin{enumerate}
			\item $\lambda_{1}\geq \lambda_{2}\geq...\geq \lambda_{n-1}>0,\quad \lambda_{n-1}\geq |\lambda_{n}|$.
			\item $\lambda_{1}+(n-1)\lambda_{n}\geq 0$.
			\item $\sigma_{k}(\lambda_{1},...,\lambda_{n})\geq 0$ for all $1\leq k\leq n-1$ and $n\geq 2$.
		\end{enumerate}
	\end{lemma}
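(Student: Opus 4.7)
The plan is to prove the three claims in order. In all three parts, the subcase $\lambda_n \geq 0$ is immediate (all $\lambda_i$ are nonnegative by the ordering, so every conclusion is trivial), so I focus on the case $\lambda_n < 0$.

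For (1), I argue by contradiction using the strict bound $\arctan t < \pi/2$. If $\lambda_{n-1} \leq 0$, then $\arctan \lambda_{n-1} + \arctan \lambda_n \leq 0$, while the remaining $n-2$ terms each satisfy $\arctan \lambda_i < \pi/2$ strictly, so $\Theta < (n-2)\pi/2$, contradicting the hypothesis. For $\lambda_{n-1} \geq |\lambda_n|$, the same strict bound on the first $n-2$ terms yields $\arctan \lambda_{n-1} + \arctan \lambda_n > 0$, and by monotonicity and oddness of $\arctan$ this gives $\lambda_{n-1} > -\lambda_n = |\lambda_n|$.

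For (2), I set $\alpha_i = \arctan \lambda_i$ and use the ordering $\alpha_i \leq \alpha_1$ for $i = 2,\ldots,n-1$ to obtain $(n-1)\alpha_1 + \alpha_n \geq \Theta \geq (n-2)\pi/2$. Writing $\alpha_1 = \pi/2 - \beta$ with $\beta \in (0,\pi/2)$, this rearranges to $\alpha_n \geq (n-1)\beta - \pi/2$. If $\beta \geq \pi/(2(n-1))$, then $\alpha_n \geq 0$, forcing $\lambda_n \geq 0$ against our assumption. Otherwise $(n-1)\beta \in (0,\pi/2)$, and applying $\tan$ I get $|\lambda_n| \leq \cot((n-1)\beta)$ while $\lambda_1 = \cot\beta$. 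The desired inequality $\lambda_1 \geq (n-1)|\lambda_n|$ then follows from the super-additivity estimate $\tan((n-1)\beta) \geq (n-1)\tan\beta$, a consequence of convexity of $\tan$ on $(0,\pi/2)$ together with $\tan(0)=0$ (equivalently, $\tan\theta/\theta$ is increasing).

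For (3), the $k=1$ case is immediate from (2) since $\sigma_1 \geq \lambda_1 + (n-1)\lambda_n \geq 0$, using $\lambda_i \geq \lambda_n$ for $i \leq n-1$. For higher $k$, I expand $\sigma_k(\lambda) = \sigma_k(\tilde\lambda) + \lambda_n \sigma_{k-1}(\tilde\lambda)$, where $\tilde\lambda = (\lambda_1,\ldots,\lambda_{n-1})$ is a positive tuple by (1), and the task reduces to the ratio bound $\sigma_k(\tilde\lambda)/\sigma_{k-1}(\tilde\lambda) \geq |\lambda_n|$. The key observation is the phase-transfer inequality $\sum_{i=1}^{n-1} \arctan \lambda_i \geq (n-2)\pi/2 + \arctan|\lambda_n|$, which renders the reduced $(n-1)$-dimensional problem strictly supercritical and enables an induction on $n$. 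In the illustrative case $n=3, k=2$, the tangent sum formula applied to $\arctan \lambda_1 + \arctan \lambda_2 \geq \pi/2 + \arctan|\lambda_3|$ gives directly $\lambda_1\lambda_2 - 1 \geq |\lambda_3|(\lambda_1+\lambda_2)$, so $\sigma_2 \geq 1 > 0$.

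The main obstacle I expect is closing the induction for (3) uniformly in $n$ and $k$. The ratio $\sigma_k(\tilde\lambda)/\sigma_{k-1}(\tilde\lambda)$ has clean upper bounds via Maclaurin's inequality, but converting the phase surplus $\arctan|\lambda_n|$ into a sharp lower bound on this ratio requires tracking the arctangent-addition estimates inductively through the Newton-Maclaurin structure; this interaction is the delicate step, and the fact that we only need $\sigma_k \geq 0$ for $k \leq n-1$ (not $k = n$) is essential, since $\sigma_n = \prod \lambda_i$ can indeed be negative when $\lambda_n < 0$.
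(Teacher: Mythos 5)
Parts (1) and (2) of your argument are complete and correct: the strict bound $\arctan\lambda_i<\tfrac{\pi}{2}$ on the top $n-2$ angles forces $\arctan\lambda_{n-1}+\arctan\lambda_n>0$, and the convexity estimate $\tan((n-1)\beta)\ge(n-1)\tan\beta$ closes (2). (The paper gives no proof of this lemma, only the citation to \cite{WaY}, so the comparison is with that reference's argument, which yours matches on these two parts.)

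Part (3), however, is a plan rather than a proof once $k\ge 2$ and $n\ge 4$: you verify only $k=1$ and the single case $n=3,\ k=2$, and you explicitly defer the step of bounding $\sigma_k(\tilde\lambda)/\sigma_{k-1}(\tilde\lambda)$ from below by $|\lambda_n|$ --- but that bound \emph{is} the content of (3), so as written there is a genuine gap. No induction on $n$ is needed to close it; your two ingredients already suffice. First, Newton's inequalities for the positive $(n-1)$-tuple $\tilde\lambda$ say that $p_k/p_{k-1}$ is non-increasing in $k$, where $p_k=\sigma_k(\tilde\lambda)/\binom{n-1}{k}$; since the resulting binomial factor $\tfrac{(n-k)(n-1)}{k}$ is $\ge 1$ for $1\le k\le n-1$, this gives
\begin{equation*}
\frac{\sigma_k(\tilde\lambda)}{\sigma_{k-1}(\tilde\lambda)}\;\ge\;\frac{\sigma_{n-1}(\tilde\lambda)}{\sigma_{n-2}(\tilde\lambda)}\;=\;\Bigl(\sum_{j=1}^{n-1}\lambda_j^{-1}\Bigr)^{-1},
\end{equation*}
so the whole of (3) reduces to the single estimate $\sum_{j<n}\lambda_j^{-1}\le 1/|\lambda_n|$. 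Second, your phase-transfer inequality, rewritten for the complementary angles $\beta_j=\tfrac{\pi}{2}-\arctan\lambda_j$, reads $\sum_{j<n}\beta_j\le \tfrac{\pi}{2}-\arctan|\lambda_n|=\arctan(1/|\lambda_n|)$, and the same superadditivity of $\tan$ on $[0,\tfrac{\pi}{2})$ that you invoked in part (2) yields $\sum_{j<n}\lambda_j^{-1}=\sum_{j<n}\tan\beta_j\le\tan\bigl(\sum_{j<n}\beta_j\bigr)\le 1/|\lambda_n|$. With these two lines inserted your argument is complete; without them, part (3) is unproved beyond the cases you computed.
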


\subsection{Jacobi Inequality}
We recall the following Jacobi-type inequality for the slope of the gradient graph $(x,Du(x))$ from \cite{BWall2}. 
\begin{lemma}\cite[Lemma 3.1]{BWall2}\label{ptJ}
    Let $u$ be a smooth solution of \eqref{slag} in $\re^n$ with $n\geq 3$ and $\Theta \geq (n-2)\frac{\pi}{2}$. Suppose that the ordered eigenvalues $\lambda_1\geq \lambda_2 \geq \cdots \geq \lambda_n$ of the Hessian $D^2u$ satisfy $\lambda_1 = \cdots =  \lambda_m > \lambda_{m+1}$ at a point $x_0$. Then the function $ b_m = \frac{1}{m}\sum_{i = 1}^m \ln\sqrt{1 + \lambda_i^2}$ is smooth near $x_0$ and at $x_0$ satisfies
    \begin{equation}\label{ptJI}
        \Dg b_m \geq c(n)|\dg b_m|^2 - C(\nu_1,\nu_2,n)(1 + |Du(x_0)|^2 ).
    \end{equation}
\end{lemma}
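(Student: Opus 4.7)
The plan has two parts: smoothness of $b_m$ near $x_0$, and the algebraic derivation of the Jacobi inequality at $x_0$. For the first, since $\lambda_m(x_0)>\lambda_{m+1}(x_0)$ and the eigenvalues of $D^2u$ depend continuously on $x$, this gap persists in a neighborhood of $x_0$, so the spectral projector $\Pi(x)$ onto the sum of the top $m$ eigenspaces is smooth there (via a Cauchy integral of the resolvent around a contour enclosing only those eigenvalues). Consequently
\[
b_m \;=\; \tfrac{1}{2m}\operatorname{tr}\!\bigl(\Pi(x)\ln(I+(D^2u)^2)\bigr)
\]
is smooth in that neighborhood, and differentiation of the individual $\lambda_i$'s with $i\le m$ makes sense provided it is done symmetrically in those indices, which is exactly what $b_m$ does.

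For the Jacobi inequality at $x_0$, I would choose coordinates diagonalizing $D^2u$ and write $\mu:=\lambda_1(x_0)=\cdots=\lambda_m(x_0)$. A direct computation yields $\partial_k b_m=\tfrac{\mu}{m(1+\mu^2)}\sum_{i=1}^m u_{iik}$, hence $|\dg b_m|^2=\tfrac{\mu^2}{m^2(1+\mu^2)^2}\,g^{kk}\bigl(\sum_{i=1}^m u_{iik}\bigr)^{2}$. Differentiating \eqref{slag} once gives $g^{ii}u_{iik}=\Theta_{x_k}+\Theta_z u_k+\Theta_{p_\ell}u_{\ell k}$, and differentiating it twice at the diagonalized point gives
\[
g^{ii}u_{iikk} \;-\; 2\,g^{ii}g^{jj}\lambda_j\,u_{ijk}^{2} \;=\; R_k,
\]
where $R_k$ collects the second derivatives of $\Theta$ in $x,z,p$ and the chain-rule products. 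Using the smooth representation above, $\partial_k^2 b_m$ also picks up Daleckii-Krein type contributions with denominators $(\mu-\lambda_j)$ for $j>m$. Assembling these gives
\[
\Dg b_m \;=\; \tfrac{\mu}{m(1+\mu^2)}\,g^{kk}\!\!\sum_{i=1}^m u_{iikk} \;+\; \mathcal{Q}(h_{ijk}) \;+\; \mathcal{E},
\]
where $\mathcal{Q}$ is a quadratic form in the second fundamental form $h_{ijk}$ and $\mathcal{E}$ collects the first and second derivatives of $\Theta$.

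The decisive step is to show $\mathcal{Q}(h_{ijk})\ge c(n)|\dg b_m|^{2}$ and $|\mathcal{E}|\le C(\nu_1,\nu_2,n)(1+|Du(x_0)|^{2})$. For the first, Lemma \ref{y1} yields $\lambda_i>0$ for $i\le n-1$ and $\lambda_{n-1}\ge|\lambda_n|$, which provides the correct sign on the coefficients $\lambda_j/[(1+\lambda_i^{2})(1+\lambda_j^{2})]$ when the negative term $-2g^{ii}g^{jj}\lambda_j u_{ijk}^{2}$ from the twice-differentiated equation is combined with the positive Daleckii-Krein contributions; the diagonal $(i,i,k)$ pieces with $i\le m$ are exactly what produce $c(n)|\dg b_m|^{2}$. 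For $\mathcal{E}$, the bounds \eqref{struct} together with $|\lambda_i|g^{ii}\le 1$ and $\sum_i g^{ii}\le n$ absorb the potentially unbounded contributions $\Theta_{p_\ell}u_{\ell k}$ and $\Theta_{p_\ell p_r}u_{\ell k}u_{rk}$ into $C(\nu_1,\nu_2,n)(1+|Du(x_0)|^{2})$.

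\textbf{Main obstacle.} The principal difficulty is the cross-term bookkeeping: one must combine the negative $-2g^{ii}g^{jj}\lambda_j u_{ijk}^{2}$ term from the equation against the positive Daleckii-Krein corrections from $\partial_k^{2}b_m$ so that $\mathcal{Q}$ is non-negative on off-diagonal modes and dominates $c(n)|\dg b_m|^{2}$ on the diagonal. At the critical phase the Lewy-Yuan rotation is unavailable, so there is no spare $\delta>0$ cushion, and Lemma \ref{y1} is the ingredient that closes the sign accounting. The gradient-dependence of $\Theta$ introduces the new error terms $\Theta_{p_\ell}u_{\ell k}$ and $\Theta_{p_\ell p_r}\lambda_\ell\lambda_r$, absent in the special Lagrangian setting of \cite{WaY}; these are precisely what force the $(1+|Du(x_0)|^{2})$ slack on the right-hand side.
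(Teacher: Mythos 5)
The paper does not actually prove this lemma: it is imported verbatim from \cite[Lemma 3.1]{BWall2}, and the only ingredients reproduced here are the expansions of $\pd_i\Theta$ and $\pd_{ij}\Theta$ displayed immediately after the statement. Measured against the standard proof of such Jacobi inequalities, your skeleton is the right one: smoothness of $b_m$ from the spectral gap, first and second variation of eigenvalues with the $(\mu-\lambda_j)^{-1}$ corrections, substitution of the twice-differentiated equation, and Lemma \ref{y1} to control the signs in the resulting quadratic form. But the treatment of the $\Theta$-terms --- the only part that is genuinely new relative to \cite{WaY} --- contains two concrete errors.

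First, by the displayed formula for $\pd_{ij}\Theta$ with $j=i$, the quantity $\pd_{ii}\Theta$ contains $\sum_k\Theta_{p_k}u_{kii}$: a first derivative of $\Theta$ multiplying a \emph{third} derivative of $u$. This is linear in the second fundamental form and unbounded, so it cannot be placed in an error term $\mathcal{E}$ controlled by $C(\nu_1,\nu_2,n)(1+|Du|^2)$, as your proposal does; it must instead be absorbed into the quadratic form $\mathcal{Q}(h_{ijk})$ by a weighted Cauchy--Schwarz, which eats into the constant $c(n)$ and deposits $|\Theta_p|^2\le\nu_1^2$ into $C$. Second, the term $\Theta_{p_ip_i}\lambda_i^2$ in $\pd_{ii}\Theta$ enters $\Dg b_m$ with weight $\tfrac{1}{m}\tfrac{\lambda_i}{1+\lambda_i^2}$ (not $g^{ii}$), producing $\tfrac{\lambda_i^3}{m(1+\lambda_i^2)}\Theta_{p_ip_i}\sim\lambda_i\,\Theta_{p_ip_i}$, which is unbounded; moreover $\Theta_{pp}$ is deliberately \emph{absent} from the list of bounded derivatives in \eqref{struct}, so your claim to absorb $\Theta_{p_\ell p_r}u_{\ell k}u_{rk}$ via $|\lambda_i|g^{ii}\le1$ fails on both counts. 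The only way to dispose of this term is by sign: partial convexity of $\Theta$ in $p$ together with $\lambda_i>0$ for $i\le m\le n-1$ (Lemma \ref{y1}). This is precisely the hypothesis of Theorem \ref{main1} whose necessity is the content of Remark \ref{sing}, and your argument never invokes it. Finally, the inequality $\mathcal{Q}(h_{ijk})\ge c(n)|\dg b_m|^2$, which you rightly flag as the decisive step, is asserted rather than proved; at the critical phase this is a delicate case analysis in the style of \cite{WaY} and is the core of the cited proof, so leaving it as a one-line appeal to Lemma \ref{y1} leaves the argument incomplete.
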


        Note that for the phase $\Theta(x,u,Du)$, assuming the Hessian $D^2u$ is diagonalized at a point $x_0$, we get
\begin{align}
    \pd_i \Theta(x,u,Du) &= \Theta_{x_i} + \Theta_z u_i + \sum_k \Theta_{p_k}u_{ki} \label{dipsi}\\
    &\overset{x_0}{=} \Theta_{x_i} + \Theta_z u_i + \Theta_{p_i}\lambda_i. \nonumber
\end{align}
        Taking the $j$-th partial derivative of \eqref{dipsi} we get 
\begin{align*}
    \pd_{ij}\Theta(x,u,Du) &= \Theta_{x_ix_j} + \Theta_{x_i z}u_j + \sum_{r=1}^n \Theta_{x_ip_r}u_{rj}\nonumber\\
    & \qquad +\left(\Theta_{zx_j} + \Theta_{zz}u_j + \sum_{s=1}^n \Theta_{z p_s}u_{sj} \right)u_i + \Theta_z u_{ij}\nonumber\\
    & \qquad +\sum_{k=1}^n \left(\Theta_{p_kx_j} + \Theta_{p_kz}u_j + \sum_{t=1}^n \Theta_{p_kp_t}u_{tj}\right)u_{ki}+\sum_{k=1}^n \Theta_{p_k}u_{kij}\nonumber\\
    & \overset{x_0}{=} \Theta_{x_ix_j} + \Theta_{x_i z}u_j + \Theta_{x_ip_j}\lambda_j\label{dijpsi@p}\\
    & \qquad +\left(\Theta_{zx_j} + \Theta_{zz}u_j + \Theta_{z p_j}\lambda_j \right)u_i + \Theta_z \lambda_i\delta_{ij}\nonumber\\
    & \qquad +\left(\Theta_{p_ix_j} + \Theta_{p_iz}u_j + \Theta_{p_ip_j}\lambda_j\right)\lambda_i + \sum_{k=1}^n \Theta_{p_k}u_{kij}.\nonumber
\end{align*}

We obtain the following by combining Corollaries 3.1, 3.2, and 3.3 of \cite{BWall2}. 
\begin{corollary}\label{singcor}
    Let $u$ be a smooth solution of \eqref{s}, \eqref{tran}, or \eqref{rotator} in $B_1\subset\re^n$ where $|\Theta|\geq (n-2)\frac{\pi}{2}$ and $n\geq 3$. Assuming the Hessian $D^2u$ is diagonalized at $x_0\in B_1$, \eqref{ptJI} holds with constant $C(n,s_2)(1+|Du(x_0)|^2)$ for \eqref{s}, $C(n,\gamma_2,\gamma_3)$ for \eqref{tran}, or $=C(n,r_2)(1 + |Du(x_0)|^2)$ for \eqref{rotator}.
\end{corollary}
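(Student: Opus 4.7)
The plan is to verify, for each of the three symmetry-reduced equations \eqref{s}, \eqref{tran}, \eqref{rotator}, that the specific phase $\Theta(x,z,p)$ fits into the structural framework of Lemma \ref{ptJ}, and then to track how the constants $\nu_1,\nu_2$ appearing there specialize in each case. The only new content beyond Lemma \ref{ptJ} is the bookkeeping needed to isolate the $|Du(x_0)|^2$ factor in the shrinker and rotator cases and to eliminate it in the translator case.

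First, I would compute the partial derivatives of each $\Theta$, treating $(x,z,p)$ as independent variables. For the translator \eqref{tran}, one has $\Theta_x=\gamma_2$, $\Theta_z=0$, $\Theta_p=\gamma_3$ and all second partials vanish, so the constants $\nu_1,\nu_2$ in \eqref{struct} reduce to bounds depending only on $|\gamma_2|,|\gamma_3|,n$, with no dependence on $z$ or $p$. For the shrinker/expander \eqref{s}, $\Theta_x=s_2 p$, $\Theta_z=-2s_2$, $\Theta_p=s_2 x$, with $\Theta_{xp}=s_2 I_n$ the only nonzero second partial. For the rotator \eqref{rotator}, $\Theta_x=r_2 x$, $\Theta_z=0$, $\Theta_p=r_2 p$, with $\Theta_{xx}=\Theta_{pp}=r_2 I_n$. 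In all three cases the structure conditions \eqref{struct} hold with $\nu_1,\nu_2$ depending only on $n$ and the respective parameters, possibly with a linear-in-$|p|$ contribution coming from $\Theta_x$ (shrinker) or $\Theta_p$ (rotator).

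Next, I would substitute these expressions into the formulas for $\pd_i\Theta$ and $\pd_{ij}\Theta$ displayed just before the corollary, and follow these through the derivation of the Jacobi inequality \eqref{ptJI} given in \cite{BWall2} for Lemma \ref{ptJ}. The key observation is that the pointwise factor $(1+|Du(x_0)|^2)$ arises precisely from quadratic occurrences of $|p|$ in $|\pd_i\Theta|^2$ and from the linear $p$-dependence of $\Theta_{x_i}$ or $\Theta_{p_i}$ when these appear in the bad terms of the Bochner-type computation. For the translator, $\Theta$ is affine and these contributions vanish identically, yielding the cleaner constant $C(n,\gamma_2,\gamma_3)$. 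For the shrinker, $\Theta_{x_i}=s_2 p_i$ produces a $|p|^2$ term that cannot be absorbed and must be carried into the right-hand side as $C(n,s_2)(1+|Du(x_0)|^2)$. For the rotator, $\Theta_{p_i}=r_2 p_i$ enters via the terms $\Theta_{p_i}\lambda_i$ and $\Theta_{p_i p_j}\lambda_i\lambda_j$, but after using Cauchy--Schwarz to absorb the $\lambda$-dependence against $c(n)|\dg b_m|^2$ (here the hypothesis $n\geq 3$ is used, together with Lemma \ref{y1} to control the Hessian signature), the residual pointwise contribution is again $C(n,r_2)(1+|Du(x_0)|^2)$.

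The main obstacle will be this last absorption step: one must check that the cross terms arising from the product of $\Theta_{p}\lambda$ contributions with the second fundamental form, evaluated at the diagonalized point $x_0$, can indeed be absorbed into $c(n)|\dg b_m|^2$ with only a benign residual proportional to $(1+|Du(x_0)|^2)$. This is exactly the content of Corollaries 3.1, 3.2, and 3.3 of \cite{BWall2}, and the present corollary follows by assembling those three computations.
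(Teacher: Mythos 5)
Your proposal is correct and follows the same route as the paper, which simply invokes Corollaries 3.1--3.3 of \cite{BWall2}: those corollaries do exactly what you describe, namely compute the $(x,z,p)$-partials of each specific phase, verify the structure conditions, and track through the Jacobi computation which terms produce the $(1+|Du(x_0)|^2)$ factor (the $\Theta_z u_i$ and $p$-dependent first-derivative contributions, absent for the affine translator phase). Your derivative computations and the resulting constants for the three cases match the statement, so the proposal is sound.
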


Next, we state the integral version of the Jacobi-type inequality from \cite{BWall2}.
\begin{proposition}\cite[Proposition 3.1]{BWall2}\label{IntJI}
    Let $u$ be a smooth solution of \eqref{slag} in $\re^n$ with $n\geq 3$ and $|\Theta|\geq (n-2)\frac{\pi}{2}$. Let
    \begin{equation*}
        b=b_1 = \log\sqrt{1 + \lambda_{\max{}}^2}
    \end{equation*}
    where $\lambda_{\max{}}$ is the largest eigenvalue of $D^2u$, i.e., $\lambda_{\max{}} = \lambda_1 \geq \lambda_2\geq\cdots\geq\lambda_n$. Then, for all non-negative $\phi\in C_0^\infty(B_R)$, $b$ satisfies the integral Jacobi inequality
    \begin{equation*}
        \int_{B_R}-\langle\nabla_g \phi, \nabla_g b\rangle_g\;dv_g \geq c(n) \int_{B_R} \phi|\nabla_g b|^2 dv_g - \int_{B_R}C(n,\nu_1,\nu_2)(1 + |Du(x)|^2)\phi dv_g.
    \end{equation*}
\end{proposition}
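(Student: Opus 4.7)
The plan is to deduce the integral Jacobi inequality from the pointwise Jacobi inequality of Lemma~\ref{ptJ} by test-function integration, the key obstacle being that $b = \log\sqrt{1+\lambda_{\max}^2}$ is only Lipschitz in general: at any $x_0$ where the top eigenvalue of $D^2u$ has multiplicity $m > 1$, the function $b$ may fail to be smooth, even though the symmetric substitute $b_m = \tfrac{1}{m}\sum_{i=1}^m \log\sqrt{1+\lambda_i^2}$ is smooth in a neighborhood, satisfies $b_m \leq b$ there, and agrees with $b$ at $x_0$. Thus each $b_m$ is a smooth lower barrier touching $b$ from below, and Lemma~\ref{ptJ} gives a pointwise Jacobi estimate for this barrier at the touching point.

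To globalize, I would introduce a smooth approximation by the exponential average
\[
b^{(\ell)} \;=\; \tfrac{1}{\ell}\log\!\Bigl(\sum_{i=1}^n (1+\lambda_i^2)^{\ell/2}\Bigr),
\]
which is smooth as a symmetric function of the eigenvalues of $D^2u$ and decreases monotonically to $b$ as $\ell \to \infty$. Its first and second derivatives can be expanded as convex combinations of the corresponding derivatives of $\log\sqrt{1+\lambda_i^2}$, weighted by $w_i^{(\ell)} := (1+\lambda_i^2)^{\ell/2}/\sum_j (1+\lambda_j^2)^{\ell/2}$, with the weights concentrating on the largest eigenvalues as $\ell\to\infty$. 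The next step is to redo the derivation of Lemma~\ref{ptJ} with these weights in place of the uniform $1/m$ averaging, aiming at a pointwise inequality
\[
\Dg b^{(\ell)} \;\ge\; c(n)\,|\dg b^{(\ell)}|^2 \;-\; C(\nu_1,\nu_2,n)\bigl(1+|Du|^2\bigr)
\]
with constants \emph{uniform in $\ell$}. This reduction relies on Lemma~\ref{y1} to control the signs and sizes of the non-top eigenvalues (which receive vanishing weight) and on the structural bounds \eqref{struct} to handle the cross-derivative terms of $\Theta$ in $(x,z,p)$ that appear when differentiating the equation twice. Once the uniform inequality holds, multiplying by $\phi \ge 0$, integrating the Laplacian term by parts on the smooth function $b^{(\ell)}$, and passing $\ell \to \infty$ by dominated convergence (using $b^{(\ell)}\to b$ uniformly and $\dg b^{(\ell)}\to\dg b$ a.e.\ via Rademacher plus uniqueness of the weak gradient of a Lipschitz function) yields the stated integral inequality.

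The main obstacle I anticipate is the uniform-in-$\ell$ bookkeeping: the mixed terms arising from differentiating \eqref{dipsi} twice, which couple $\lambda_i\lambda_j$ with the softmax weights $w_i^{(\ell)} w_j^{(\ell)}$, must be absorbed either into $c(n)|\dg b^{(\ell)}|^2$ or into the $C(1+|Du|^2)$ remainder, and this absorption must not degrade as $\ell\to\infty$ or as non-top eigenvalues approach the top. An alternative route, which I would pursue in parallel if the uniform estimate proves delicate, is to partition $B_R$ into the open strata $\{\lambda_m > \lambda_{m+1}\}$ on which $b_m$ is smooth, apply the pointwise Jacobi for $b_m$ on each, and use that $b - b_m \ge 0$ attains its minimum at each point of $\{b=b_m\}$ to identify $\dg b = \dg b_m$ a.e.\ on that stratum; this rewrites the gradient integrals globally in terms of $\dg b$ without any approximation, at the cost of a careful analysis showing that the stratification interfaces contribute no boundary terms.
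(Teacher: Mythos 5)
First, note that the paper does not prove this proposition: it is quoted verbatim from \cite[Proposition 3.1]{BWall2} (just as Lemma \ref{ptJ} is), so the only fair comparison is with the proof there, which follows the Wang--Yuan scheme: use the smooth lower barriers $b_m$ touching $b$ from below on the coincidence sets $\{\lambda_1=\cdots=\lambda_m>\lambda_{m+1}\}$ and upgrade the pointwise inequality \eqref{ptJI} to the distributional one by a measure-theoretic argument on those sets. Your framework is the right one at that level, and your observation that $\nabla_g b=\nabla_g b_m$ a.e.\ on $\{b=b_m\}$ is correct and is genuinely one of the ingredients. The limiting argument at the end of your first route is also sound as stated (weak-$*$ convergence of the gradients handles the linear term, and weak lower semicontinuity handles $\int\phi|\nabla_g b^{(\ell)}|^2$, so a.e.\ convergence of gradients is not even needed).

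The genuine gap is the step you yourself flag: the uniform-in-$\ell$ pointwise Jacobi inequality for $b^{(\ell)}$ is asserted, not proved, and it is not a routine rerun of Lemma \ref{ptJ}. That lemma is evaluated at a point where $\lambda_1=\cdots=\lambda_m$ \emph{exactly}, and its proof exploits that coincidence (the divided differences in the second derivative of the spectral function, and the algebraic inequalities that produce the $c(n)|\nabla_g b_m|^2$ term). For $b^{(\ell)}$ the Hessian in matrix space contains divided differences of the form $\bigl(w_i^{(\ell)}f'(\lambda_i)-w_j^{(\ell)}f'(\lambda_j)\bigr)/(\lambda_i-\lambda_j)$ with $f(\lambda)=\log\sqrt{1+\lambda^2}$, whose weight part is of size $\ell$ on spectral gaps of size $1/\ell$; some of these $O(\ell)$ contributions are favorable (convexity of the log-sum-exp), but one must check that \emph{all} of them either have a good sign or are absorbed, and that the $\sigma_k$-type estimates behind the $c(n)|\nabla_g b|^2$ term survive when the top eigenvalues are merely close rather than equal. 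None of this is carried out, so the proof is incomplete at its central step. Two smaller points: (i) $b^{(\ell)}\to\log\sqrt{1+\max_i\lambda_i^2}$, which equals $\log\sqrt{1+\lambda_1^2}$ only when $\lambda_1\ge|\lambda_n|$; by Lemma \ref{y1} this holds for $\Theta\ge(n-2)\tfrac{\pi}{2}$, so you must first reduce $|\Theta|\ge(n-2)\tfrac{\pi}{2}$ to that case via $u\mapsto-u$, otherwise your regularization converges to the wrong function. (ii) Your fallback stratification route defers exactly the point where the cited proof has to work: the strata with $m\ge2$ are not open, so ``integrate by parts on each stratum'' is not an operation, and declaring that the interfaces contribute no boundary terms is the theorem, not a remark.
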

\begin{remark}
    This holds via the same proof for solutions to \eqref{s},\eqref{tran},\eqref{rotator} each with their respective $C$.
\end{remark}

\section{Proof of the Gradient Estimates}\label{sec-grad}
\begin{proof} The method of this proof is inspired by \cite{BMS}. We denote $ca\leq b\leq Ca$ by $a\sim b$. Let $M = \osc_{B_1}u$ and replace $u$ with $u - \inf_{B_1}u + M$ so that $M\leq u \leq 2M$. We will use the test function $w = \eta|Du| + Au^2/2$ where $\eta = 1 -|x|^2$ and $A = 3\sqrt{n}/M$. Suppose $w$ achieves its maximum at $x_0\in \bar{B}_1$. If $x_0\in\pd B_1$, then we are done so suppose $x_0\in B_1$. Diagonalize $D^2u(x_0) = \text{diag}(\lambda_1,\dots,\lambda_n)$. We will assume $u_n \geq |Du|/\sqrt{n} > 0$ and observe that for each $k$, at the maximum point $x_0$,
\[
0 = \pd_k w(x_0) = \eta\frac{u_k\lambda_k}{|Du|} + \eta_k|Du| + Auu_k.
\]
By taking $A = 3\sqrt{n}/M$, we see that
\begin{equation}\label{dwzero}
    \eta\lambda_n \frac{u_n}{|Du|} \in (-c(n),-C(n))|Du|.
\end{equation}
Thus $\lambda_n < 0$, and
\[
\eta \sim \frac{|Du|}{|\lambda_n|}.
\]
Since $|Du|\lesssim \eta|\lambda_n|$ we may assume $|\lambda_n|> 1$ as otherwise we would be done. 

At $x_0$, we also have
\begin{align}
    0&\geq g^{ij}\pd_{ij}w(x_0)\nonumber\\
    &= |Du|g^{ij}\eta_{ij}+ 2g^{ij}\eta_i|Du|_j + \eta g^{ij}|Du|_{ij} + Aug^{ij}u_{ij} + Ag^{ij}u_iu_j.\label{d2w}
\end{align}
As in \cite{BMS}, we get 
\begin{align}
    |Du|g^{ij}\eta_{ij} &= -2\sum_{j=1}^n\frac{1}{1+\lambda_j^2}|Du| \gtrsim -\frac{1}{\lambda_n^2}|Du| \sim \frac{\eta}{|\lambda_n|},\label{c1}\\
    2g^{ij}\eta_i|Du|_j &\overset{x_0}{\gtrsim} -\sum_{j=1}^n\frac{1}{1+\lambda_j^2}\frac{u_j\lambda_j}{|Du|}\gtrsim -\frac{1}{|\lambda_n|},\label{c2}\\
    A u g^{ij}u_{ij} &= Au\sum_{j=1}^n\frac{\lambda_j}{1+\lambda_j^2} \gtrsim - \frac{1}{|\lambda_n|},\label{c3}\\
    A g^{ij}u_iu_j &\geq A\frac{u_n^2}{1+\lambda_n^2} \sim A \frac{|Du|^2}{\lambda_n^2} \sim A\frac{\eta|Du|}{|\lambda_n|}\label{goodt}.
\end{align}
The remaining term is 
\begin{align}
    \eta g^{ij}|Du|_{ij} &= \eta g^{ij} \frac{u_{ijk}u_k}{|Du|} + \eta \sum_{j=1}^n g^{jj}\frac{(|Du|^2 - u_j^2)\lambda_j}{|Du|^3}\nonumber\\
    &\geq \eta \frac{u_k}{|Du|}\pd_k\Theta(x,u(x),Du(x))\nonumber\\
    &= \eta\frac{u_k}{|Du|}\left(\Theta_{x_k}(x,u,Du) + \Theta_{z}(x,u,Du) u_k + \Theta_{p_k}(x,u,Du) \lambda_k\right).\label{dkThet}
\end{align}
We note that by the argument in \cite{BMS}, we get that
\begin{equation}
    \Theta - \frac{(n-2)\pi}{2} \leq \frac{1}{|\lambda_n|}.
\end{equation}

By the structure condition \eqref{struct} on $\Theta$, we will here assume $|\Theta_{xx}|\leq \|\Theta_{xx}\|_{L^\infty(\Gamma_1)}$ where $\|\Theta_{xx}\|_{L^\infty(\Gamma_1)}$ is assumed independent of $u$ and $Du$.
If we also assume \eqref{eq:interp}, i.e.,
\[
|\Theta_x| \lesssim_{\Theta_{xx}} \left(\Theta - \frac{(n-2)\pi}{2}\right)^\frac{1}{2},
\]
which is similar to the interpolation inequality \cite[Lemma 7.7.2]{lhorm1}, then 
\begin{equation}\label{thetx}
    \eta \frac{u_k}{|Du|}\Theta_{x_k} \gtrsim_{\Theta_{xx}} -\eta |\lambda_n|^{-1/2}.
\end{equation}

To bound the middle term of \eqref{dkThet}, we assume \eqref{eq:monotone-u}, that is, $\Theta_z \geq 0$.  Note that such a condition plays a role in the $C^0$ maximum principle used in existence and uniqueness theory.
Hence
\begin{equation}\label{thetu}
    \eta \frac{u_k^2}{|Du|}\Theta_z = \eta|Du|\Theta_z \geq 0.
\end{equation}

To bound the final term of \eqref{dkThet}, we will assume \eqref{eq:decay-p}, namely, 
\[
\lvert p\rvert\,\lvert D_p\Theta(x,z,p)\rvert \;\le\; C\Bigl(\Theta(x,z,p)- \tfrac{(n-2)\pi}{2}\Bigr).
\]
Then, using \eqref{dwzero} we see that
\begin{align}
    \eta\frac{u_k\lambda_k}{|Du|}\Theta_{p_k} &= -\Theta_{p_k}(\eta_k|Du| + A u u_k)\nonumber\\
    &\geq -C(n)|Du\|D_p\Theta|\nonumber\\
    &\geq -C(\Theta(x,z,p)- \tfrac{(n-2)\pi}{2}) \nonumber\\
    &\geq -C|\lambda_n|^{-1} \label{thetp}.
\end{align}

Rearranging the terms of \eqref{d2w}, using all of the above estimates \eqref{c1}, \eqref{c2}, \eqref{c3}, \eqref{goodt}, and inserting \eqref{thetx}, \eqref{thetu}, and \eqref{thetp} into \eqref{dkThet}, we get
\[
\eta |Du|\lesssim_{\Theta_{xx}} M(1 + \eta|\lambda_n|^{1/2}) \lesssim M(1 + (\eta|Du|)^{1/2}).
\]
By Young's inequality, we then get
\[
\eta|Du(x_0)| \leq C(\|\Theta_{xx}\|_{L^\infty(\Gamma_1)},n)( 1 + (\osc_{B_1}u)^2)
\]
which gives
\[
|Du(0)| \leq w(0) \leq w(x_0) \leq C(\|\Theta_{xx}\|_{L^\infty(\Gamma_1)},n)( 1 + (\osc_{B_1}u)^2),
\]
concluding the proof. \end{proof}

\section{Singular Non-Lipschitz Viscosity Solutions}\label{Sec_sing_soln}
Using counterexamples, we show that $\Theta\ge (n-2)\pi/2$ is necessary for Lipschitz regularity, and that none of the conditions in Theorem \ref{grad} can be removed in dimension $n=1$.  In dimension $n=1$, we construct H\"older-but-not-Lipschitz viscosity solutions for $C^2$ phases which fail at least one of each of the conditions \eqref{eq:interp}, \eqref{eq:monotone-u}, and \eqref{eq:decay-p}.  In higher dimensions, the quadratic lifts of these $n=1$ solutions provide non-Lipschitz viscosity solutions with subcritical phase.

\smallskip
We note that H\"older viscosity solutions are not known for the constant phase special Lagrangian equation, or even for $F(D^2u)=\Theta(x)$ with $\Theta\in C^{1,\alpha}$.  The reason is that the even function $u=|x_1|^\alpha$ is not a supersolution.  More generally, homogeneous functions with sign-constant determinant will not be admissible \cite{mooney2022homogeneous}.

\smallskip
If $Du$ dependence in $\Theta$ is allowed, then we can instead look for odd functions $u=\text{sign}(x_1)|x_1|^\alpha$.  If we identify this as a viscosity solution of $F(D^2u)=\Theta(x)$, then $\Theta(x)$ must be discontinuous at $x_1=0$.  But since $Du$ is itself singular, there are smooth combinations of $x_1,u,$ and $Du$ which agree with $D^2u$.

\subsection{Fails condition \texorpdfstring{\eqref{eq:monotone-u}}{(0.3)}}\label{subsec:fail-03}
Let $u(x_1) = \operatorname{sign}(x_1)\,\lvert x_1\rvert^\alpha$ for $0<\alpha<1$. Then
\begin{align*}
u'' &= \alpha(\alpha-1)\,\operatorname{sign}(x_1)\,\lvert x_1\rvert^{\alpha-2},\\
u'  &= \alpha\,\lvert x_1\rvert^{\alpha-1}.
\end{align*}
Hence
\begin{equation*}
u\,(u')^q \;=\; \alpha^q\,\operatorname{sign}(x_1)\,\lvert x_1\rvert^{\alpha(1+q)-q}
\;=\; \alpha^{q-1}(\alpha-1)^{-1}\,u''
\end{equation*}
provided $\alpha(1+q)-q = \alpha-2$, i.e., $q\alpha = q-2$, so $\alpha = (q-2)/q$. Therefore $u$ is a viscosity solution of
\begin{equation*}
\arctan(u'') \;=\; -\,\arctan\!\bigl((1-\alpha)\,\alpha^{1-q}\,u\,(u')^q\bigr).
\end{equation*}
This clearly fails condition \eqref{eq:monotone-u}, due to the sign of $\Theta_u$.

More generally, for integers $m\ge 0$,
\begin{equation*}
u^{2m+1}\,(u')^q \;=\; C\,\operatorname{sign}(x_1)\,\lvert x_1\rvert^{\alpha(2m+1+q)-q}
 \;=\; -C_1\,u''
\end{equation*}
if $\alpha-2=\alpha(2m+1+q)-q$, i.e., $\alpha=(q-2)/(2m+q)$. In this case,
\begin{equation*}
\arctan(u'') \;=\; -\,\arctan\!\bigl(C_1^{-1}\,u^{2m+1}\,(u')^q\bigr).
\end{equation*}

\subsection{Fails condition \texorpdfstring{\eqref{eq:interp}}{(0.2)}}
\label{sec:xcounter}
For integers $m\ge 0$,
\begin{equation*}
x_1^{2m+1}\,(u')^q \;=\; \operatorname{sign}(x_1)\,\lvert x_1\rvert^{2m+1+q\alpha-q}
 \;=\; C\,u''
\end{equation*}
if $2m+1+q\alpha-q = \alpha-2$, i.e., $\alpha=(q-3-2m)/(q-1)$. In this case,
\begin{equation*}
\arctan(u'') \;=\; -\,\arctan\!\bigl(C^{-1}\,x_1^{2m+1}\,(u')^q\bigr).
\end{equation*}
This fails the uniform $C^2$ bound in $x$, since taking two derivatives yields additional chain rule
factors of $(u')^q$, which range over the entire real line. Hence the resulting function cannot be
bounded in the variable $x_1^{2m+1}(u')^q$.

\subsection{Fails conditions \texorpdfstring{\eqref{eq:decay-p}}{(0.4)} and \texorpdfstring{\eqref{eq:interp}}{(0.2)}}
Let $L=-\log\lvert x_1\rvert$ be a positive even function near $x_1=0$, and set $u'(x_1)=L^{1/2}$ with
\[
u(x_1)\sim x_1 L^{1/2} + o\bigl(x_1 L^{1/2}\bigr).
\]
Then
\begin{equation*}
2u''(x_1) \;=\; -\,L^{-1/2}\,x_1^{-1} \;=\; -x_1\,L^{-1/2}\,x_1^{-2}
 \;=\; -\,x_1\,e^{2(u')^2}/u'.
\end{equation*}
Therefore, near $x_1=0$, $u$ is a viscosity solution of an equation of the form
\begin{equation*}
\arctan u'' \;=\; -\,\arctan\!\left[\tfrac12\,x_1\,g(u')\right],\qquad
g(u') = e^{2(u')^2}/u' \quad \text{near } \lvert u'\rvert=\infty.
\end{equation*}
This fails condition \eqref{eq:decay-p} due to Gaussian growth, and it also fails condition \eqref{eq:interp}.

\subsection{Higher dimensions}
Let $v(x_1)$ be the non-Lipschitz viscosity solution of Section \ref{sec:xcounter} constructed to $\arctan v''=\Theta_1(x_1,u_1)\in[-\pi/2,\pi/2]$.  For $a_i\in\re$, we define a solution in separated variables
$$
u(x_1,\dots,x_n)=v(x_1)+\sum_{i=2}^n a_ix_i^2/2.
$$
Then $u$ is a non-Lipschitz viscosity solution of
\eqal{
F(D^2u)&=\Theta_1(x_1,u_1)+\sum_{i=2}^n\arctan a_i=:\Theta(x,Du).
}
Near $\Theta_1=-\pi/2$, it is not possible to choose $a_i$ so that $\Theta\ge (n-2)\pi/2$, so the total phase $\Theta$ attains subcritical values.  This shows that $\Theta(x,u,Du)\ge (n-2)\pi/2$ cannot be dropped from the gradient estimate.

\subsection{Remarks}
It is an interesting question to find a solution that fails \eqref{eq:decay-p} alone, but it must be noted that \eqref{eq:decay-p} cannot be dropped.  It is also interesting to see whether any of these conditions can be dropped in higher dimensions, provided that $\Theta\ge (n-2)\pi/2$.  The $n=1$ lifts drop below the critical value $(n-2)\pi/2$ near the singularities.

\smallskip
From a geometric point of view, if $\Theta=\Theta(x,Du)$, then the mean curvature is the normal projection $H=(J\nabla_{(x,y)}\Theta(x,y))^\bot$ of the smooth ambient gradient $\nabla\Theta=(\Theta_x,\Theta_y)$ restricted to $(x,Du)$.  Note that the mean curvature is not bounded for these examples near the singularity.

\smallskip
Our examples reflect that smooth such data $\Theta(x,y)$ for the mean curvature of a Lagrangian submanifold is not sufficient to make the submanifold $(x,Du)$ continuous, or even bounded.  Note that although the viscosity solutions in \cite{MooneySavin} are Lipschitz and semi-convex, the submanifolds themselves are $C^{1,1}$.  The solutions in \cite{NV,WdY} are $C^{1,\alpha}$, while the submanifolds  are analytic.  Our examples are not well defined in any sense except the viscosity one.

\section{Proof of the Hessian Estimates}\label{sec-Hess}
We prove Theorem \ref{main1} from which Corollary \ref{main0} follows.

\begin{proof} Let $n\geq 3$. To simplify the notation of the remaining proof, we take $R = 2n+2$ where $u$ is a solution on $B_{2n+2}\subset\re^n$. Then by scaling $v(x) = \frac{u(\frac{R}{2n+2}x)}{(\frac{R}{2n+2})^2}$, we get the estimate in Theorem \ref{main1}. We denote $C=C(n,\nu_1,\nu_2)(1 + \|Du\|^2_{L^\infty(B_{2n+1})})$ the positive constant from Lemma \ref{ptJ}. Let $b$ be defined as in Proposition \ref{IntJI}.

We observe that 
\begin{equation}\label{lowbd}
    b-\frac{1}{4}\ln(4/3)\geq \ln\sqrt{1 + \tan^2(\frac{\pi}{2}-\frac{\pi}{n})}-\frac{1}{4}\ln(4/3)\geq \frac{1}{4}\ln(4/3).
\end{equation}
Let $c_0 =\frac{1}{8}\ln(4/3)$, using \eqref{ptJI} and the above inequality we get
\begin{equation}
    \Delta_g (b-2c_0) \geq c(n)|\nabla_g b|^2 - C\geq -C\frac{(b-2c_0)}{2c_0}.
\end{equation}
\begin{enumerate}
    \item[Step 1.] We first show that $(b-2c_0)^\frac{n}{n-2}$ meets the requirements of the MVI \cite[Prop 5.1]{AB}:
    \begin{align*}
        &\int -\langle \nabla_g \phi, \nabla_g (b-2c_0)^\frac{n}{n-2}\rangle_g dv_g \\&= \int-\langle\nabla_g(\frac{n}{n-2}(b-2c_0)^\frac{2}{n-2}\phi) - \frac{2n}{(n-2)^2}(b-2c_0)^\frac{4-n}{n-2}\phi\nabla_g b,\nabla_g b\rangle_g dv_g\\
        &\geq \int (\frac{n}{n-2}c(n)(b-2c_0)^\frac{2}{n-2}\phi|\nabla_g b|^2)dv_g \\
        &\qquad + \int(\frac{2n}{(n-2)^2}(b-2c_0)^\frac{4-n}{n-2}\phi|\nabla_g b|^2)dv_g - \int C\frac{n}{n-2}\phi(b-2c_0)^\frac{2}{n-2} dv_g\\
        &\geq - \int C\frac{n}{n-2}\frac{1}{b-2c_0}(b-2c_0)^\frac{n}{n-2}\phi dv_g\\
        &\geq - \frac{C}{2c_0}\frac{n}{n-2}\int (b-2c_0)^\frac{n}{n-2}\phi dv_g
    \end{align*}
    where the last inequality comes from \eqref{lowbd}.

    Applying the MVI to the function $(b-2c_0)^\frac{n}{n-2}$ we get
    \begin{align}
        (b-2c_0)(0)&\leq C(n,|\Vec{H}|,c_0)\left(\int_{\bar B_1\cap L} (b-2c_0)^\frac{n}{n-2}\right)^\frac{n-2}{n}\nonumber\\
        &\leq C(n,|\Vec{H}|,c_0)\left(\int_{B_1} (b-2c_0)^\frac{n}{n-2}\right)^\frac{n-2}{n}\label{bMVI}
    \end{align}
    where $L=(x,Du(x))\subset\re^n\times\re^n$ is the Lagrangian submanifold, $\bar B_1$ is the ball of radius $1$ centered at $(0, Du(0))$ in $\re^n\times \re^n$, and $B_1$ is the ball of radius $1$ centered at $0$ in $\re^n$. Using the mean curvature formula \eqref{mc}, it follows that
    \[
    |\Vec{H}|^2=\sum_{j=1}^n\frac{1}{1+\lambda_j^2}(\Theta_{x_j} + \Theta_z u_j + \Theta_{p_j}\lambda_j)^2\leq C(n,\nu_1)\|Du\|^2_{L^\infty(B_1)}.
    \]
    
    We choose a cut off $\phi\in C_0^\infty(B_2)$ such that $0\leq\phi\leq 1$, $\phi = 1$ on $B_1$, and $|D\phi|< 2$. We get
    \begin{equation}\label{bcuttoff}
        \left(\int_{B_1}(b-2c_0)^\frac{n}{n-2}\right)^\frac{n-2}{n}\leq \left(\int_{B_2}\phi^\frac{2n}{n-2}(b-2c_0)^\frac{n}{n-2}\right)^\frac{n-2}{n} = \left( \int_{B_2}(\phi\sqrt{b-2c_0})^\frac{2n}{n-2}\right)^\frac{n-2}{n}.
    \end{equation}
    Assuming $\phi\sqrt{b-2c_0}$ is $C^1$ by approximation, we apply the Sobolev inequality \cite[Theorem 2.1]{MS} and use the mean curvature formula \eqref{mc} to get
    \begin{equation}\label{bSobolev}
        \left( \int_{B_2}(\phi\sqrt{b-2c_0})^\frac{2n}{n-2}\right)^\frac{n-2}{n}\leq C(n)[\int_{B_2}|\nabla_g (\phi\sqrt{b-2c_0})|^2dv_g + \int_{B_2}|\phi\sqrt{b-2c_0}\nabla_g\Theta|^2dv_g].
    \end{equation}
    To bound the left integral, we use the fact that
    \begin{align}
        |\nabla_g(\phi\sqrt{b-2c_0})|^2 &= |\frac{1}{2\sqrt{b-2c_0}}\phi\nabla_g b + \sqrt{b-2c_0}\nabla_g\phi|^2\nonumber\\
        &\leq \frac{1}{4(b-2c_0)}\phi^2|\nabla_g b|^2 + 2(b-2c_0)|\nabla_g \phi|^2\nonumber\\
        &\leq \frac{\phi^2}{8c_0}|\nabla_g b|^2 + 2(b-2c_0)|\nabla_g\phi|^2\nonumber\\
        &\leq \frac{\phi^2}{8c_0}|\nabla_g b|^2 + 2b|\nabla_g\phi|^2.\label{nablaCS}
    \end{align}

    Combining \eqref{bMVI}, \eqref{bcuttoff},\eqref{bSobolev}, and \eqref{nablaCS}, we get
    \begin{equation}\label{bThreeint}
        (b-2c_0)(0)\leq C(n,\nu_1,c_0)[\int_{B_2}\phi^2|\nabla_g b|^2dv_g + \int_{B_2}b|\nabla_g\phi|^2dv_g + \int_{B_2}\phi^2(b-2c_0)|\nabla_g\Theta|^2dv_g].
    \end{equation}

    \item[Step 2.] 
    To bound the third integral, we use \eqref{dipsi} to see that
    \begin{align}
        \int_{B_2}\phi^2(b-2c_0)|\nabla_g\Theta|^2dv_g&\leq C(\nu_1)\int_{B_2}\phi^2(b-2c_0)\sum_{j=1}^n\frac{1}{1 + \lambda_j^2}(1 + u_j^2 + \lambda_j^2)dv_g\nonumber\\
        &\leq C(\nu_1)(1 + \|Du\|^2_{L^\infty(B_2)})\int_{B_2}\phi^2b\sum_{j=1}^n\frac{1}{1 + \lambda_j^2}dv_g\nonumber \\&\qquad+ C(\nu_1)\int_{B_2}\phi^2(b-2c_0)\sum_{j=1}^n\frac{\lambda_j^2}{1 + \lambda_j^2}dv_g.\label{dtheta}
    \end{align}

    We seek to bound this last integral. Observe, 
    \begin{align*}
        -\int b\phi^2 u_j\Delta_g u_jdv_g &= \int \langle\nabla_g b\phi^2 u_j, \nabla_g u_j\rangle dv_g\\
        &= \int \phi^2 u_j\langle \nabla_g b, \nabla_g u_j\rangle + \int 2\phi b u_j\langle\nabla_g \phi,\nabla_g u_j \rangle dv_g + \int b \phi^2|\nabla_g u_j|^2 dv_g\\
        &\geq \int \frac{1}{2}(b-c_0)\phi^2|\nabla_g u_j|^2dv_g - \int\frac{1}{2c_0}\phi^2u_j^2|\nabla_g b|^2dv_g - \int 2u_j^2|\nabla_g \phi^2|dv_g
    \end{align*}
    where the last inequality comes from Young's inequality. Rearranging, we obtain
    \begin{equation}\label{bminusc}
        \int \frac{1}{2}(b-c_0)\phi^2|\nabla_g u_j|^2dv_g\leq -\int b\phi^2 u_j\Delta_g u_jdv_g+ \int\frac{1}{2c_0}\phi^2u_j^2|\nabla_g b|^2dv_g + \int 2u_j^2|\nabla_g \phi^2|dv_g.
    \end{equation}
    As $\Delta_g(x,Du(x)) = \Vec{H}$, we get that
    \[
    \Delta_g u_j = \langle\Vec{H},E_{n+j}\rangle = \langle J\nabla_g\Theta, E_{n+j}\rangle = \langle\nabla_g\Theta, E_j\rangle = g^{jk}\pd_k\Theta
    \]
    where $\{E_i\}$ is the unit orthonormal basis of $\re^{2n}$.
    Hence,
    \begin{align}
        -\int b \phi^2 &u_j\Delta_g u_j dv_g = -\int b\phi^2u_jg^{jk}\pd_k\Theta dv_g\nonumber\\
        &= -\int b\phi^2 u_j\frac{1}{1 + \lambda_j^2}(\Theta_x + \Theta_u u_j + \Theta_{u_j}\lambda_j)dv_g\nonumber\\
        &\leq C(\nu_1)(1 + \|Du\|^2_{L^\infty})\int b\phi^2\frac{1}{1 + \lambda_j^2}dv_g + C(\nu_1)\|Du\|_{L^\infty}\int b\phi^2 \frac{\lambda_j}{1 + \lambda_j^2}dv_g\nonumber\\
        &\leq C(\nu_1)(1 + \|Du\|^2_{L^\infty})\int b\phi^2\frac{1}{1 + \lambda_j^2}dv_g +  \frac{1}{4}\int b\phi^2\frac{\lambda_j^2}{1 + \lambda_j^2}dv_g
        \label{Deltauj}
    \end{align}
    where the last line comes from Young's inequality. 
    Using the fact that
    \begin{equation}\label{graduj}
        |\nabla_g u_j|^2 = \frac{\lambda_j^2}{1 + \lambda_j^2}
    \end{equation}
    and combining \eqref{Deltauj} and \eqref{bminusc} into \eqref{dtheta}, as well as \eqref{lowbd}, we get
    \begin{align*}
        \int_{B_2}\phi^2(b-2c_0)|\nabla_g\Theta|^2dv_g&\leq C(\nu_1)(1 + \|Du\|^2_{L^\infty(B_2)})\int_{B_2}\phi^2b\sum_{j=1}^n\left(\frac{1}{1 + \lambda_j^2}\right)dv_g\\
        &\quad + \frac{C(\nu_1)}{c_0}\|Du\|^2_{L^\infty}\left[\int\phi^2|\nabla_g b|^2dv_g + \int b|\nabla_g \phi^2|dv_g\right]
    \end{align*}
    \item[Step 3.] Plugging this into \eqref{bThreeint}, it follows
    \begin{align}
        (b-2c_0)(0) \leq C(n,\nu_1,c_0)(1 +& \|Du\|^2_{L^\infty})\bigg[\int_{B_2}\phi^2|\nabla_g b|^2dv_g + \int_{B_2}b|\nabla_g\phi|^2dv_g\label{bgrad1}\\
        &+\int_{B_2}\phi^2b\sum_{j=1}^n\frac{1}{1 + \lambda_j^2} dv_g\bigg].\nonumber
    \end{align}
    
    Using the integral Jacobi inequality, with the constants as in \eqref{IntJI}, we get
    \begin{align*}
        \int_{B_2}\phi^2|\nabla_g b|^2 dv_g &\leq \frac{1}{c(n)}[\int_{B_2}\phi^2\Delta_g b dv_g + \int_{B_2}\phi^2Cdv_g]\\
        &= \frac{1}{c(n)}[-\int_{B_2}2\phi\langle\nabla_g \phi,\nabla_g b\rangle dv_g + \int_{B_2}\phi^2Cdv_g]\\
        &\leq \frac{1}{2}\int_{B_2}\phi^2|\nabla_g b|^2dv_g + \frac{2}{c(n)^2}\int_{B_2}|\nabla_g \phi|^2dv_g + \frac{1}{c(n)}\int_{B_2}\phi^2 Cdv_g
    \end{align*}
    from which we conclude
    \begin{equation}\label{gradb}
        \int_{B_2}\phi^2|\nabla_g b|^2 dv_g \leq \frac{4}{c(n)^2}\int_{B_2}|\nabla_g \phi|^2dv_g + \frac{2}{c(n)}\int_{B_2}\phi^2 Cdv_g.
    \end{equation}
    Using \eqref{lowbd}, we get
    \begin{equation}\label{gradphi}
        \int_{B_2}|\nabla_g \phi|^2dv_g\leq\frac{1}{c_0}\int_{B_2}b|\nabla_g \phi|^2 dv_g \leq \frac{2}{c_0}\int_{B_2}b\sum_{j=1}^n\frac{1}{1 + \lambda_j^2}dv_g.
    \end{equation}
    Furthermore, we get that
    \begin{equation}\label{volbd}
        \int_{B_2}\phi^2 dv_g = \frac{1}{n}\int_{B_2}\phi^2\sum_{j=1}^n\left(\frac{1}{1 + \lambda_j^2} + \frac{\lambda_j^2}{1 + \lambda_j^2}\right) dv_g
    \end{equation}
    where we then bound the right-most integral using
    \begin{align*}
         \int \phi^2&\sum_{j=1}^n|\nabla_g u_j|^2dv_g\leq 4\int u_j^2|\nabla_g \phi|^2dv_g -2\int\phi^2 u_jg^{ij}\pd_j\Theta dv_g\\
         &\leq 16\|Du\|^2_{L^\infty}\int \sum_{j=1}^n\frac{1}{1 + \lambda_j^2}dv_g + C(\nu_1)(1 + \|Du\|^2_{L^\infty})\int\phi^2 \sum_{j=1}^n\frac{1}{1 +\lambda_j^2}dv_g\\
         &\qquad+ C(\nu_1)\|Du\|_{L^\infty}\int \phi^2\sum_{j=1}^n\frac{\lambda_j}{1 + \lambda_j^2} dv_g\\
         &\leq C(\nu_1)(1 + \|Du\|^2_{L^\infty})\int \sum_{j=1}^n\frac{1}{1 + \lambda_j^2}dv_g + \frac{1}{2}\int \phi^2\sum_{j=1}^n\frac{\lambda_j^2}{1+\lambda_j^2}dv_g.
    \end{align*}
   Hence, again using \eqref{graduj}, \eqref{volbd} is bounded by
    \begin{equation}\label{volbd2}
        \int_{B_2}\phi^2dv_g \leq C(n,\nu_1)(1  + \|Du\|^2_{L^\infty})\left[\int_{B_2}\sum_{j=1}^n\frac{1}{1 + \lambda_j^2}dv_g\right].
    \end{equation}
    Plugging \eqref{volbd2} and \eqref{gradphi} into \eqref{gradb}, plugging \eqref{gradb} into \eqref{bgrad1}, and using \eqref{lowbd}, we obtain
    \begin{equation*}
        (b-2c_0)(0) \leq C(n,\nu_1,\nu_2,c_0)(1 + \|Du\|^2_{L^\infty})\left[\int_{B_2}b\sum_{j=1}^n\frac{1}{1 + \lambda_j^2}dv_g\right].
    \end{equation*}
    \item[Step 4.] We will bound
    \[
    \int_{B_2}b\sum_{j=1}^n\frac{1}{1 + \lambda_j^2}dv_g
    \]
    as in \cite{WaY}. We use the conformality identity:
    \begin{equation*}
        \left(\frac{1}{1+\lambda_1^2},\dots,\frac{1}{1 + \lambda_n^2}\right)V = \cos\Theta D_\lambda(\sigma_1 - \sigma_3 + \cdots) - \sin\Theta D_\lambda(\sigma_0 - \sigma_2 + \cdots).
    \end{equation*}
    Taking the trace gives
    \begin{equation}\label{conform}
        \sum_{j=1}^n\frac{1}{1 + \lambda_j^2}V = \cos\Theta\sum_{0\leq 2k< n}(-1)^k(n-2k)\sigma_{2k} - \sin\Theta\sum_{0\leq 2k-1< n}(-1)^k(n-2k+1)\sigma_{2k-1}.
    \end{equation}
    Each coefficient of $\sigma_k$ in \eqref{conform} is bounded above by $n$ since $|\cos\Theta|,|\sin\Theta|\leq 1$, so it suffices to bound
    \[
    \int_{B_2} b \sigma_k dx
    \]
    for all $0\leq k \leq n-1$. We use the divergence structure of $\sigma_k(D^2u)$:
    \begin{align*}
        k\sigma_k(D^2u) &= \sum_{i,j=1}^n\frac{\pd \sigma_k}{\pd u_{ij}}\frac{\pd^2 u}{\pd x_i\pd x_j} = \sum_{i,j=1}^n\frac{\pd}{\pd x_i}\left(\frac{\pd \sigma_k}{\pd u_{ij}}\frac{\pd u}{\pd x_j}\right)\\
        &= \text{div}(L_{\sigma_k} Du)
    \end{align*}
    where $L_{\sigma_k}$ is the matrix $(\frac{\pd \sigma_k}{\pd u_{ij}})$. As $\sigma_k>0$ for each $0\leq k\leq n-1$, we use a cuttoff function $\phi$ such that $0\leq \phi\leq 1$, $\phi=1$ on $B_2$, and $|D\phi|<2$. We get
    \begin{align}
        \int_{B_2} b \sigma_kdx&\leq \int_{B_3}\phi b\sigma_kdx = \int_{B_3}\phi b \frac{1}{k}\text{div}(L_{\sigma_k}Du)dx\nonumber\\
        &= \frac{1}{k}\int_{B_3}-\langle bD\phi + \phi Db,L_{\sigma_k}Du\rangle dx\nonumber\\
            &\leq C(n)\|Du\|_{L^\infty}\bigg[\int_{B_3} b\sigma_{k-1}\nonumber \\&\hspace{1.5in}+ \int_{B_3}[|\nabla_g b|^2 + \text{tr}(g^{ij})]\sqrt{\det g}dx \bigg]\label{bsigk}
    \end{align}
    where the last inequality comes from the argument in \cite{WaY}.
    
    Using \eqref{gradb} and \eqref{volbd2}, it follows that
    \begin{equation}\label{gradb2}
    \int_{B_r} \phi^2|\nabla_g b|^2dv_g \leq C(n,\nu_1,\nu_2)(1 + \|Du\|^2_{L^\infty})\int_{B_{r+1}} \text{tr}(g^{ij})\sqrt{\det g}dx.
    \end{equation}
    Combining \eqref{gradb2} and \eqref{bsigk}, we get the inductive inequality
    \[
    \int_{B_r}b\sigma_kdx\leq C(n,\nu_1,\nu_2)\|Du\|_{L^\infty}\left[\int_{B_{r+1}} b\sigma_{k-1}dx + (1 + \|Du\|^2_{L^\infty})\int_{B_{r+2}}\text{tr}(g^{ij})\sqrt{\det g}dx \right].
    \]
    Using this induction for each $\sigma_k$, with $0\leq k \leq n-1$, altogether we get
    \begin{align}
        (b-2c_0)(0)&\leq C(n,\nu_1,\nu_2)(1+\|Du\|^2_{L^\infty(B_{r+n+2})})\Bigg(\sum_{j=1}^n\|Du\|_{L^\infty(B_{r+n+2})}^j\nonumber \\
        &\qquad + \left(\sum_{j=1}^{n+2}\|Du\|_{L^\infty(B_{r+n+2})}^j\right)\int_{B_{r+n+2}}\text{tr}(g^{ij})\sqrt{\det g}dx\Bigg)\label{induct1}
    \end{align}
    where we use that
    \[
    \int_{B_r} b dx\leq C(n)\|Du\|_{L^\infty(B_r)}
    \]
    as in \cite{WaY}.
    
    In order to bound the remaining integral, we use the conformality identity \eqref{conform} to get
    \begin{equation}\label{trace}
        \int_{B_{r}}\text{tr}(g^{ij})\sqrt{\det g}dx \leq C(n)\sum_{j=0}^{n-1}\int_{B_{r+1}}\sigma_jdx.
    \end{equation}
    The argument above with $b=1$ gives the inductive inequality
    \[
    \int_{B_r}\sigma_{k}dx \leq C(n)\|Du\|_{L^\infty(B_{r+1})}\int_{B_{r+1}}\sigma_{k-1}dx,
    \]
    which gives
    \begin{equation*}
        \int_{B_{r}}\text{tr}(g^{ij})\sqrt{\det g}dx\leq C(n)\sum_{j=0}^{n-1}\|Du\|^{j}_{L^\infty(B_{r+n})}.
    \end{equation*}
    Plugging these into \eqref{induct1}, and using Young's inequality, we get
    \[
    (b-2c_0)(0) \leq C(n,\nu_1,\nu_2)(1+\|Du\|^2_{L^\infty(B_{2n+1})})\left[\|Du\|^{2n+1}_{L^\infty(B_{2n+1})}+1 \right].
    \]
    Adding $2c_0$ across and exponentiating yields
    \[
    |D^2u(0)| \leq C_1\exp{[C_2\|Du\|^{2n+3}_{L^\infty(B_{2n+1})}]}
    \]
    where $C_1$ and $C_2$ depend on $n,\nu_1$, and $\nu_2$.
    \end{enumerate}
\end{proof}

\section{$C^{2,\alpha}$ Regularity} \label{sec-c2alpha}
Once Hessian and gradient estimates are established, a final challenge remains in proving $C^{2,\alpha}$ estimates. For general equations $F(D^2u, Du, u, x) = 0$, higher regularity results require both concavity and uniform ellipticity \cite{Krylov}. Without concavity, known results apply to uniformly elliptic equations of the form $F(D^2u) = f(x)$ with interior estimates \cite{CCannals} or convex level sets of the graph \cite{caffarelli2000priori}. 

Here we will prove that the arctangent operator can be modified to a concave operator in dimensions $n>2$.

\begin{lemma}\label{Expo}
    Let $f(\lambda) = \sum_{j=1}^n \arctan(\lambda_j)$ be defined on $\{\lambda\in\re^n |\; F(D^2u)= \sum_{j=1}^n\arctan\lambda_j \geq (n-2)\frac{\pi}{2}\}$ and suppose $u$ is a solution of \eqref{slag} on $B_R$, then there exists a constant $A$ depending on $\|D^2u\|_{L^\infty(B_R)}$ and $n>2$ such that $-e^{-Af(\lambda)}$ is a concave function.
\end{lemma}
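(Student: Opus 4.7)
The plan is to directly analyze the Hessian of $g(\lambda) = -e^{-Af(\lambda)}$, which computes to $D^2 g = e^{-Af}(A\, D^2 f - A^2\, Df \otimes Df)$. Since $e^{-Af} > 0$, concavity of $g$ amounts to the pointwise matrix inequality $D^2 f \leq A\, (Df)(Df)^T$, i.e., for every $v \in \re^n$,
\[
\sum_{i=1}^n \frac{-2\lambda_i}{(1+\lambda_i^2)^2}\, v_i^2 \;\leq\; A\Bigl(\sum_{i=1}^n \frac{v_i}{1+\lambda_i^2}\Bigr)^2.
\]
By Lemma \ref{y1}, the hypothesis $\Theta \geq (n-2)\pi/2$ forces $\lambda_1 \geq \cdots \geq \lambda_{n-1} > 0$, so only the summand $i = n$ on the left can be positive (when $\lambda_n < 0$); the nonpositive $i<n$ contributions can be rearranged to the right as $\frac{2\lambda_i}{(1+\lambda_i^2)^2} v_i^2 \geq 0$. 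After this rearrangement, for fixed $v_n \neq 0$ the task is to show that the resulting nonnegative quadratic form in $w = (v_1, \ldots, v_{n-1})$ dominates $A\,\frac{2|\lambda_n|}{(1+\lambda_n^2)^2}\, v_n^2$.

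The quadratic form to minimize is a positive rank-one perturbation of a positive diagonal matrix, so a Sherman--Morrison computation gives its minimum in closed form and reduces concavity to the scalar requirement
\[
A \;\geq\; \frac{2|\lambda_n|}{1 - |\lambda_n|\, \sigma}, \qquad \sigma \;=\; \sum_{i<n} \frac{1}{\lambda_i},
\]
which is well-posed only when $|\lambda_n|\, \sigma < 1$. The main obstacle, and the place where $n > 2$ enters, is certifying this strict gap quantitatively in terms of $\|D^2 u\|_{L^\infty(B_R)}$. To do so I would substitute $\mu_i = \tfrac{\pi}{2} - \arctan \lambda_i$ for $i < n$ and $\mu_n = \tfrac{\pi}{2} + \arctan \lambda_n$; then the constraint $\Theta \geq (n-2)\pi/2$ becomes $\mu_n \geq \sum_{i<n} \mu_i$, while $|\lambda_n|\, \sigma = \cot \mu_n \sum_{i<n} \tan \mu_i$.

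The strict subadditivity $\tan a + \tan b < \tan(a+b)$ on $(0, \pi/2)$ applied iteratively yields $\sum_{i<n} \tan \mu_i < \tan\bigl(\sum_{i<n} \mu_i\bigr) \leq \tan \mu_n$, so $|\lambda_n|\, \sigma < 1$ strictly, provided $n - 1 \geq 2$. This is exactly the dimensional obstruction: in $n=2$ the subadditivity collapses to the equality $\tan \mu_1 = \tan \mu_1$, and at the critical phase $\lambda_1 = |\lambda_2|$ one has $|\lambda_2|\, \sigma = 1$, so no finite $A$ can work. For $n \geq 3$ the tangent addition formula produces explicit positive expressions for the gap---e.g.\ $|\lambda_3|\, \sigma = 1 - 1/(\lambda_1 \lambda_2)$ when $n=3$---that are controlled away from one solely by $\|D^2 u\|_{L^\infty(B_R)}$, and combined with $|\lambda_n| \leq \|D^2 u\|_{L^\infty(B_R)}$ this delivers the desired $A = A(\|D^2 u\|_{L^\infty(B_R)}, n)$ for which $-e^{-Af}$ is concave on the relevant domain.
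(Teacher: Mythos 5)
Your proposal is correct, and it reaches the paper's conclusion by a genuinely different route for the key quantitative step. The paper also starts from the Hessian computation \eqref{dex}, but then verifies positive semidefiniteness of $H_{ij}$ through its determinant, $\det H \propto A2^{n-1}\sigma_{n-1}(\lambda)+2^n\sigma_n(\lambda)$, and devotes the proof to showing $\sigma_{n-1}\ge C(\|D^2u\|_{L^\infty},n)>0$ by a dichotomy on whether $|\lambda_n|$ is smaller or larger than a threshold $\epsilon$, anchored on the lower bound $\lambda_{n-1}>\tan T$ from \eqref{lnlower}. You instead test the form on vectors, eliminate $(v_1,\dots,v_{n-1})$ by explicit minimization, and arrive at the scalar condition $A\bigl(1-|\lambda_n|\sigma\bigr)\ge 2|\lambda_n|$ with $\sigma=\sum_{i<n}\lambda_i^{-1}$; this is algebraically the same condition, since $A\sigma_{n-1}+2\sigma_n=\bigl(\prod_{i<n}\lambda_i\bigr)\bigl[A(1-|\lambda_n|\sigma)-2|\lambda_n|\bigr]$, but your certification of the gap via $\mu_n\ge\sum_{i<n}\mu_i$ and strict subadditivity of the tangent is different from (and arguably more illuminating than) the paper's case analysis: it isolates exactly why $n=2$ fails (the subadditivity degenerates to an equality, so $|\lambda_2|\sigma=1$ at criticality), whereas the paper only cites Yuan's counterexamples, and it produces an explicit admissible constant $A\sim\|D^2u\|_{L^\infty}^3$. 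Two small points to tighten: your displayed $n=3$ identity $|\lambda_3|\sigma=1-1/(\lambda_1\lambda_2)$ holds with equality only at the exactly critical phase and should be an inequality $\le$ in general; and the gap $1-|\lambda_n|\sigma$ is in the worst case only bounded below by a multiple of $|\lambda_n|$ (e.g.\ $\gtrsim |\lambda_n|/\|D^2u\|_{L^\infty}^3$), so you should note explicitly that the required $A=2|\lambda_n|/(1-|\lambda_n|\sigma)$ nevertheless stays bounded because the numerator carries the same factor of $|\lambda_n|$, with the regime $|\lambda_n|$ small handled separately via $\sigma\le (n-1)/\lambda_{n-1}$ and Lemma \ref{y1}.
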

\begin{remark}
    Note this exponentiation technique does not work in two dimensions due to counterexamples of Yuan \cite{Ycount_notes}.
\end{remark}

\begin{proof}
    Using the calculation as in \cite[Lemma 2.2]{CPW}, observe
    \begin{equation}\label{dex}
        e^{Af}\pd^2_{ij} e^{-Af} =-A^2f_i f_j - Af_{ij} = -A \left(\frac{A + 2\lambda_i\delta_{ij}}{(1 + \lambda_i^2)(1+\lambda_j^2)}\right) := -AH_{ij}
    \end{equation}
    where
    \[
    \det H_{ij} = \frac{1}{\prod(1 + \lambda_i^2)^2}(A2^{n-1} \sigma_{n-1}(\lambda) + 2^n \sigma_n(\lambda)).
    \]
    If $\lambda_n \geq 0$, then we are done, so suppose that $\lambda_n < 0$. We show that $\sigma_{n-1} \geq C > 0$. Let $\theta_j = \arctan(\lambda_j)$ and observe that
    \[
    \frac{(n-2)\pi}{2}- \theta_n \leq \sum_{j=1}^{n-1}\theta_j \leq (n-2)\arctan(\|D^2u\|_{L^\infty(B_R)}) + \theta_{n-1}.
    \]
    Rearranging the above, noting that $-\theta_n > 0$, and taking the tangent of both sides, we obtain
    \begin{equation}\label{lnlower}
        \lambda_{n-1} > \tan(T)
    \end{equation}
    where $T =\frac{\pi}{2}- \arctan(\|D^2u\|_{L^\infty(B_R)})$. Note that $n>2$ is crucial here.
    
    Fix
    \begin{equation}\label{epsis}
        \epsilon = \frac{s}{(n-1)} \quad\text{where}\quad s= \frac{\tan(T)}{2},
    \end{equation}
    and suppose that $0>\lambda_n \geq -\epsilon $. We denote 
    \[
    \hat{\lambda}_j = \prod_{i\neq j}^n \lambda_i \quad\text{and}\quad \widehat{\lambda_i\lambda_j} = \prod_{k\neq i,j}^n\lambda_k.
    \]
    We then observe
    \begin{align*}
        \sigma_{n-1}(\lambda)= \sum_{j=1}^n\hat{\lambda}_j &\ge \hat{\lambda}_n - s\sum_{j=1}^{n-1}\widehat{\lambda_j\lambda_n}\\
        & = \frac{\sum_{j=1}^{n-1}\left[(\lambda_j - s)\widehat{\lambda_j\lambda_n}\right]}{n-1}\\
        &\geq \frac{(n-1)(\lambda_{n-1} - s)(\lambda_{n-1})^{n-2}}{n-1}> \frac{1}{2}[\tan(T)]^{n-1} = C
    \end{align*}
    where the last inequality comes from \eqref{lnlower} and \eqref{epsis}.

    On the other hand, if $ \lambda_n <- \epsilon$, we get
    \[
    -\frac{2(n-1)}{\tan(T)} > \frac{1}{\lambda_n},
    \]
    which combined with
    \[
    \frac{(n-1)}{\tan(T)} > \frac{(n-1)}{\lambda_{n-1}} \geq \sum_{j=1}^{n-1}\frac{1}{\lambda_j}
    \]
    gives
    \[
    0 > -\frac{(n-1)}{\tan(T)} > \sum_{j=1}^n \frac{1}{\lambda_n} = \frac{\sigma_{n-1}(\lambda)}{\sigma_{n}(\lambda)}.
    \]
    Multiplying across by $\sigma_n(\lambda) < 0$ flips the inequalities and we get
    \[
    0 < \frac{(n-1)(-\sigma_n(\lambda))}{\tan(T)} < \sigma_{n-1}(\lambda).
    \]
    Noting that 
    \[
    -\sigma_n(\lambda_n) \geq |\lambda_n|^n > \epsilon^n = \frac{[\tan(T)]^{n}}{2^n(n-1)^n} 
    \]
    gives $\sigma_{n-1}(\lambda) \geq C > 0$. 

    Now, choose $A$ large enough so that
    \begin{align*}
        \det H_{ij} &= \frac{1}{\prod(1 + \lambda_i^2)^2}(A2^n \sigma_{n-1}(\lambda) + 2^n \sigma_n(\lambda)) \\
        & \geq \frac{1}{\prod(1 + \lambda_i^2)^2}(A2^{n-1} C - 2^n \|D^2u\|_{L^\infty(B_R)}^n) > 0,
    \end{align*}
    that is,
    \[
    A > \frac{2\|D^2u\|_{L^\infty(B_R)}}{C},
    \]
    and this completes the proof.
\end{proof}

\begin{proposition}\label{c2alph1}
        Let $u$ be a $C^{1,1}$ solution of \eqref{slag} on $B_1\subset\re^n$. Then $u\in C^{2,\alpha}(B_{1/2})$ with
        \[
        |D^2u(x) - D^2u(0)| \leq C(\|u\|_{C^2(B_1)},\nu_1,\nu_2,n)|x|^\alpha.
        \]
\end{proposition}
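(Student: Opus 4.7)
The plan is to use Lemma \ref{Expo} to convert \eqref{slag} into a concave, uniformly elliptic equation and then apply the Evans--Krylov $C^{2,\alpha}$ theorem. Set $M = \|D^2u\|_{L^\infty(B_1)}$; by the $C^{1,1}$ hypothesis $M$ is finite. Let $A = A(M,n)$ be the constant furnished by Lemma \ref{Expo}, so that the symmetric spectral function $\tilde f(\lambda) := -\exp\bigl(-A\sum_j \arctan\lambda_j\bigr)$ is concave on the relevant admissible cone intersected with $\{|\lambda|_\infty \le M\}$. The spectral extension $\tilde F(N) := \tilde f(\lambda(N))$ to symmetric matrices inherits concavity, since $\tilde f$ is symmetric in its arguments.

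Next I would check uniform ellipticity. Direct computation gives $\tilde F_{u_{ij}} = A\,e^{-AF}\,F_{u_{ij}}$, where the eigenvalues of $F_{u_{ij}}$ are $1/(1+\lambda_j^2) \in [1/(1+M^2),\,1]$ and the scalar factor $e^{-AF}$ is trapped between positive constants (as $|F|<n\pi/2$). Hence $\tilde F$ is uniformly elliptic with constants depending only on $n$, $M$, and $A$. Composing both sides of \eqref{slag} with $t \mapsto -e^{-At}$ rewrites the equation as
\[
\tilde F(D^2u) \;=\; -e^{-A\,\Theta(x,\,u(x),\,Du(x))} \;=:\; g(x).
\]
The structure condition \eqref{struct}, together with $u \in C^{1,1}$ (so that $Du$ is Lipschitz), implies $g \in C^{0,1}(B_1)$ with Lipschitz norm controlled by $n$, $\nu_1$, $A$, and $\|u\|_{C^2(B_1)}$. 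Applying the Evans--Krylov theorem for concave, uniformly elliptic equations with H\"older right-hand side then yields $u \in C^{2,\alpha}(B_{1/2})$ with the claimed estimate.

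The main analytic difficulty in this route has already been handled by Lemma \ref{Expo}; what remains is quantitative bookkeeping. The concavity parameter $A$ --- and consequently the ellipticity constants of $\tilde F$ and the Lipschitz norm of $g$ --- depends on the Hessian bound $M$, and all of these dependencies propagate into the final $C^{2,\alpha}$ constant in exactly the form stated, namely through $\|u\|_{C^2(B_1)}$, $\nu_1$, $\nu_2$, and $n$. A minor caveat worth recording is that the concavity in Lemma \ref{Expo} is only asserted on the admissible set $\{f(\lambda) \ge (n-2)\pi/2\}$; since this constraint holds along the solution, the transformed operator is concave on a neighborhood of the graph of $D^2u$ in matrix space, which is all that Evans--Krylov requires. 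An alternative, essentially equivalent route would retain the full $(u,Du,x)$-dependence and invoke a Krylov-type $C^{2,\alpha}$ theorem directly for $\tilde F(D^2u) = \tilde\Theta(x,u,Du)$, as alluded to in Remark \ref{C2a}.
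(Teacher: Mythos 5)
Your proposal is correct and follows essentially the same route as the paper: exponentiate the operator via Lemma \ref{Expo} to obtain a concave, uniformly elliptic equation $\tilde F(D^2u)=-e^{-A\Theta(x,u,Du)}=:g(x)$, control the H\"older (indeed Lipschitz) norm of $g$ using the structure conditions \eqref{struct} together with the $C^{1,1}$ bound on $u$, and invoke the interior $C^{2,\alpha}$ estimate for concave uniformly elliptic equations with $C^\alpha$ right-hand side. The only differences are presentational --- you make explicit the uniform ellipticity check and the caveat that concavity is only needed on the admissible set $\{f\ge(n-2)\pi/2\}\cap\{|\lambda|_\infty\le M\}$, both of which the paper leaves implicit.
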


\begin{corollary}
    Let $u$ be a $C^{1,1}$ solution of \eqref{s}, \eqref{tran}, or \eqref{rotator} on $B_1\subset\re^n$. Then $u\in C^{2,\alpha}(B_{1/2})$ with the same estimate above where $C$ also depends on $s_2$ for \eqref{s}, $\gamma_2,\gamma_3$ for  \eqref{tran}, or $r_2$ for \eqref{rotator}.
\end{corollary}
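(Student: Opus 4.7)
The plan is to deduce this corollary as an immediate specialization of Proposition \ref{c2alph1}, mirroring how Corollary \ref{main0} is deduced from Theorem \ref{main1}. Each of the equations \eqref{s}, \eqref{tran}, \eqref{rotator} fits into the framework of \eqref{slag} with an explicit polynomial phase $\Theta(x,z,p)$: for \eqref{s} take $\Theta(x,z,p)=s_1+s_2(x\cdot p-2z)$; for \eqref{tran} take $\Theta(x,z,p)=\gamma_1+\gamma_2\cdot x+\gamma_3\cdot p$; for \eqref{rotator} take $\Theta(x,z,p)=r_1+\tfrac{r_2}{2}(|x|^2+|p|^2)$. Each choice is smooth in the independent variables $(x,z,p)$, so the $C^2(\Gamma_R)$ hypothesis of Proposition \ref{c2alph1} is automatic.

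The main technical step is to verify the structure conditions \eqref{struct} for each phase. A direct differentiation gives: for \eqref{s}, $\Theta_x=s_2 p$, $\Theta_z=-2s_2$, $\Theta_p=s_2 x$, with $\Theta_{x_ip_j}=s_2\delta_{ij}$ and the other listed second partials vanishing; for \eqref{tran}, $\Theta_x=\gamma_2$, $\Theta_z=0$, $\Theta_p=\gamma_3$, with all second partials zero; and for \eqref{rotator}, $\Theta_x=r_2 x$, $\Theta_z=0$, $\Theta_p=r_2 p$, with $\Theta_{xx}=\Theta_{pp}=r_2 I_n$ and the remaining listed second partials vanishing. Since $u\in C^{1,1}(B_1)$, both $\osc_{B_1} u$ and $\|Du\|_{L^\infty(B_1)}$ are controlled by $\|u\|_{C^2(B_1)}$. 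Hence \eqref{struct} holds with $\nu_1,\nu_2$ depending on $\|u\|_{C^2(B_1)}$ together with $s_2$, or $\gamma_2,\gamma_3$, or $r_2$, respectively.

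Once these verifications are in place, I simply invoke Proposition \ref{c2alph1} to obtain $u\in C^{2,\alpha}(B_{1/2})$ with the stated modulus, where the constant depends on $n$, $\|u\|_{C^2(B_1)}$, $\nu_1$, and $\nu_2$. Substituting the dependencies of $\nu_1,\nu_2$ computed above yields precisely the advertised dependence on $s_2$, $(\gamma_2,\gamma_3)$, or $r_2$. There is essentially no substantive obstacle: the argument is a routine verification that the three symmetry-reduction equations are instances of \eqref{slag} with admissible structural data. The only point worth flagging, exactly as in the passage from Theorem \ref{main1} to Corollary \ref{main0}, is that the criticality hypothesis $|\Theta|\ge(n-2)\tfrac{\pi}{2}$ is needed to apply Proposition \ref{c2alph1}, since its proof relies on the exponentiation Lemma \ref{Expo}, which holds only in the critical-or-supercritical phase range.
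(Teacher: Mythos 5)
Your proposal is correct and follows essentially the paper's route: the paper proves this corollary inside the proof of Proposition \ref{c2alph1} by specializing the phase, the only difference being that it bounds $\|\Theta\|_{C^\alpha(B_1)}$ for each of \eqref{s}, \eqref{tran}, \eqref{rotator} by explicit difference-quotient computations (\eqref{alphtran}, \eqref{alphshrink}, \eqref{alphrot}) rather than through the generic chain-rule bound \eqref{alphgen} that you reach by verifying the structure conditions \eqref{struct}. Since the final constant already depends on $\|u\|_{C^2(B_1)}$ through the uniform ellipticity constants and the exponent $A$ from Lemma \ref{Expo}, your generic bound yields the same stated dependence on $s_2$, $(\gamma_2,\gamma_3)$, or $r_2$, and your remark about the implicitly needed criticality hypothesis matches the paper's own (also implicit) reliance on Lemma \ref{Expo}.
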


\begin{proof}
    We modify the second order, uniformly elliptic equation \eqref{slag} to a concave one via exponentiation from Lemma \ref{Expo}. Applying \cite[Cor. 1.3]{CC1}, we get $u\in C^{2,\alpha}(B_{1/2})$ with the estimate
    \[
    |D^2u(x) - D^2u(0)|\leq C(\|u\|_{L^\infty(B_1)}+\|e^{-A\Theta}\|_{C^\alpha(B_1)})|x|^\alpha
    \]
    where $C$ depends on the uniform ellipticity constants and the dimension $n$. 
    
    By Taylor expanding the exponential, we get
    \[
    \|e^{-A\Theta}\|_{C^\alpha(B_{1})}\leq A\|\Theta\|_{C^\alpha(B_{1})}.
    \]
    To bound $\|\Theta\|_{C^\alpha(B_{1})}$, we Taylor expand $F(x) = \Theta(x,u(x), Du(x))$ to get 
    \[
    \Theta(x_1,u(x_1),Du(x_1)) - \Theta(x_2, u(x_2), Du(x_2)) = DF(\xi)\cdot (x_1 - x_2)
    \]
    for $\xi$ between $x_1,x_2$, where
    \begin{align}
        |DF(\xi)| &= |\Theta_x(\xi,u(\xi),Du(\xi)) + \Theta_z(\xi,u(\xi),Du(\xi))Du(\xi) + \Theta_{p}(\xi,u(\xi),Du(\xi))D^2u(\xi)|\nonumber\\
        &\leq \nu_1(1 + \|Du\|_{L^\infty(B_1)}+ \|D^2u\|_{L^\infty(B_1)})\label{alphgen}
    \end{align}
    which bounds $\|\Theta\|_{C^\alpha(B_1)}$.

    When $u$ solves the translator equation \eqref{tran}, we get that
    \begin{align}
        &\frac{|\Theta(x_1,u(x_1),Du(x_1)) -\Theta(x_2,u(x_2),Du(x_2))|}{|x_1 - x_2|^\alpha} = \frac{|\gamma_2\cdot(x_1 - x_2) + \gamma_3\cdot(Du(x_1) - Du(x_2))|}{|x_1 - x_2|^\alpha}\nonumber\\
        &\qquad\qquad\leq |\gamma_2| + |\gamma_3|\|Du\|_{C^\alpha(B_{1})}.\label{alphtran}
    \end{align}

    When $u$ solves the shrinker/expander equation \eqref{s}, we add and subtract $x_1\cdot Du(x_2)$ in the numerator to get
    \begin{align}
        &\frac{|\Theta(x_1,u(x_1),Du(x_1)) -\Theta(x_2,u(x_2),Du(x_2))|}{|x_1 - x_2|^\alpha}\nonumber\\
        &\qquad\qquad= |s_2|\frac{|x_1\cdot Du(x_1) - x_2\cdot Du(x_2) - 2(u(x_1) - u(x_2))|}{|x_1 - x_2|^\alpha}\nonumber\\
        &\qquad\qquad= |s_2|\frac{|x_1\cdot (Du(x_1) - Du(x_2)) + Du(x_2)\cdot(x_1 - x_2) - 2(u(x_1) - u(x_2))|}{|x_1 - x_2|^\alpha}\nonumber\\
        &\qquad\qquad\leq |s_2|\left(\|Du\|_{C^\alpha(B_{1})} + \|Du\|_{L^\infty(B_1)} + 2\|u\|_{C^\alpha(B_{1})}\right)\label{alphshrink}.
    \end{align}

    Lastly, when $u$ solves the rotator equation \eqref{rotator}, we use that
    \[
    ||v|^2 - |w|^2| = |\langle v + w , v - w\rangle| \leq |v+w||v-w|
    \]
    to get
    \begin{align}
        &\frac{|\Theta(x_1,u(x_1),Du(x_1)) -\Theta(x_2,u(x_2),Du(x_2))|}{|x_1 - x_2|^\alpha}\nonumber\\
        &\qquad\qquad= \frac{|r_2|}{2}\frac{| |x_1|^2 - |x_2|^2 + |Du(x_1)|^2 - |Du(x_2)|^2 |}{|x_1 - x_2|^\alpha}\nonumber\\
        &\qquad\qquad\leq \frac{|r_2|}{2}\left(2 + 2\|Du\|_{L^\infty(B_1)}\|Du\|_{C^\alpha(B_{1})}\right).\label{alphrot}
    \end{align}
    The estimates in \eqref{alphgen}, \eqref{alphtran}, \eqref{alphshrink}, and \eqref{alphrot} complete the proof.
\end{proof}

\subsection{Discussion: alternate approaches}

Here we outline an alternate proof using the upward rotation method of Bhattacharya-Ogden \cite[Lemma 4.1]{BOHamstat}, which is based on a rotation of Yuan \cite{YY0}. Note that this will work in dimension two.

\begin{lemma}[Lemma 4.1, \cite{BOHamstat}]
    Let $u \in C^{1,1}(B_R)$ for $B_R\subset \re^n$. Let 
    \[
    K = \sup_{B_R}|D^2u| \quad \text{and} \quad \gamma = \frac{1}{2}\left(\frac{\pi}{2} - \arctan K\right)
    \]
    Then the Lagrangian submanifold $L = (x, Du(x)) \subset\re^n\times\re^n$ can be represented as a gradient graph $(\bar{x},D\bar{u}(\bar{x}))$ given by 
    \[
        \begin{pmatrix}
            \bar{x}\\
            \bar{y}
        \end{pmatrix} = 
        \begin{pmatrix}
            \cos\gamma & -\sin\gamma\\
            \sin\gamma & \cos\gamma
        \end{pmatrix}
        \begin{pmatrix}
            x\\
            y
        \end{pmatrix}.
    \]
    The new potential is given by 
    \begin{equation}\label{rotpot}
        \bar{u}(x) = u(x) - \sin\gamma \cos\gamma\frac{|Du|^2 - |x|^2}{2}- \sin^2\gamma(x\cdot Du(x)).
    \end{equation}
    The rotated coordinates are Lipschitz with respect to the original coordinates by
    \[
    |\bar{x}_1 - \bar{x_2}|\geq (\cos\gamma - K\sin\gamma)|x_1 - x_2|
    \]
    The rotated gradient graph in the $\bar{x}$ coordinates is Lipschitz with constant
    \[
    |D\bar{u}(\bar{x_1}) - D\bar{u}(\bar{x}_2) |\geq\left(\frac{\sin\gamma + \|u\|_{C^{1,1}(B_R)}}{\cos\gamma - K\sin\gamma}\right)|\bar{x_1} - \bar{x_2}|.
    \]
\end{lemma}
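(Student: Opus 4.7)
\medskip

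\noindent\textbf{Proof proposal.}  This is the standard Lewy--Yuan rotation of angle $\gamma$ in the $(x_j,y_j)$-planes of $\re^n\times\re^n$.  My plan is to (i) show that the new $\bar x$-projection is a diffeomorphism, (ii) identify the new potential $\bar u$ by integrating along the rotated graph, and (iii) read off the Lipschitz constants from the Jacobian of the rotation restricted to the graph.  The only nontrivial geometric content is that the angle $\gamma=\tfrac12(\pi/2-\arctan K)$ is precisely the largest rotation allowed before the graph becomes vertical over the rotated base.

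\medskip

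\noindent\emph{Step 1: The rotated base is a diffeomorphism.}  Define $\Phi(x):=\cos\gamma\,x-\sin\gamma\,Du(x)$ so that $\bar x=\Phi(x)$ on the graph.  Its Jacobian is
\[
D\Phi(x)=\cos\gamma\,I-\sin\gamma\,D^2u(x).
\]
Since $|D^2u|\le K$, the eigenvalues of $D\Phi$ lie in $[\cos\gamma-K\sin\gamma,\cos\gamma+K\sin\gamma]$.  The double-angle identity gives $\tan(2\gamma)=1/K$, which is equivalent to $\cos\gamma-K\sin\gamma>0$; thus $D\Phi$ is positive definite and $\Phi$ is a global $C^{0,1}$ diffeomorphism from $B_R$ onto its image.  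The lower Lipschitz bound $|\bar x_1-\bar x_2|\ge(\cos\gamma-K\sin\gamma)|x_1-x_2|$ then follows from the mean value theorem $\bar x_1-\bar x_2=\int_0^1 D\Phi(x_2+t(x_1-x_2))\,dt\,(x_1-x_2)$ together with the eigenvalue bound.

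\medskip

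\noindent\emph{Step 2: Existence of the new potential.}  The symplectic form $\omega=\sum dx_j\wedge dy_j$ is invariant under the block rotation, so the rotated Lagrangian remains Lagrangian; equivalently, the one-form $\bar y\,d\bar x$ is closed on the graph and hence exact.  To exhibit its primitive, I differentiate the proposed formula
\[
\bar u(x):=u(x)-\sin\gamma\cos\gamma\,\tfrac{|Du|^2-|x|^2}{2}-\sin^2\gamma\,(x\cdot Du(x))
\]
in $x$.  Collecting terms and using $\bar y_i=\sin\gamma\,x_i+\cos\gamma\,u_i$, one checks the identity
\[
\partial_{x_i}\bar u(x)=\cos\gamma\,\bar y_i-\sin\gamma\,\bar y_k\,u_{ki}=\bar y_k\,\partial_{x_i}\bar x_k,
\]
which is exactly the chain rule $\partial_{x_i}[\bar u\circ\Phi^{-1}(\Phi(x))]=(\nabla\bar u)(\bar x)\cdot\partial_{x_i}\bar x$ provided $\nabla\bar u(\bar x)=\bar y$.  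Since $D\Phi$ is invertible, this determines $\nabla\bar u$ uniquely, giving the desired gradient graph representation.

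\medskip

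\noindent\emph{Step 3: Lipschitz bound on $\bar x\mapsto D\bar u(\bar x)$.}  Writing $D\bar u(\bar x)=\Psi(x)$ with $\Psi(x):=\sin\gamma\,x+\cos\gamma\,Du(x)$, the mean value theorem yields
\[
|D\bar u(\bar x_1)-D\bar u(\bar x_2)|\le(\sin\gamma+K\cos\gamma)|x_1-x_2|,
\]
and combining with the lower bound from Step 1 produces the ratio $(\sin\gamma+K\cos\gamma)/(\cos\gamma-K\sin\gamma)$ stated in the lemma (reading the inequality as an upper Lipschitz bound, as is intended).

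\medskip

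\noindent The only real obstacle is Step 1, namely the correct calibration of $\gamma$: one must verify $\cos\gamma>K\sin\gamma$, which is where the assumption $K=\sup|D^2u|<\infty$ (i.e., the $C^{1,1}$ hypothesis on $u$) enters essentially.  Step 2 is a direct but slightly delicate chain rule computation, and Step 3 is then immediate from the two Jacobian bounds.
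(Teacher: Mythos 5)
Your argument is correct and is exactly the standard Lewy--Yuan rotation computation used in the cited source (this paper quotes the lemma from Bhattacharya--Ogden without reproving it): verifying $\cos\gamma - K\sin\gamma>0$ from $\tan(2\gamma)=1/K$, checking that $d\bar u=\bar y\cdot d\bar x$ for the stated potential, and reading off both Lipschitz constants from the Jacobians $\cos\gamma\,I-\sin\gamma\,D^2u$ and $\sin\gamma\,I+\cos\gamma\,D^2u$. You also correctly note that the final inequality in the statement should be read as an upper Lipschitz bound (your sharper constant $\sin\gamma+K\cos\gamma$ is dominated by the stated $\sin\gamma+\|u\|_{C^{1,1}(B_R)}$), so no changes are needed.
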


Now we prove the $C^{2,\alpha}$ estimates.
\begin{proof}[Proof of the $C^{2,\alpha}$ estimates]
    We will prove these estimates on a neighborhood of 
    \[
    U_0 = \{x\in B_{\frac{1}{2}} |\; \Theta(x,u(x),Du(x)) \geq (n-2)\pi/2\}
    \] 
    as by symmetry they will hold for the set where $\Theta \leq -(n-2)\pi/2$. Let $\Omega = \bar{x}(U)$ for $U = \{x\in B_1 |\; \Theta(x,u(x),Du(x)) \geq (n-2)\frac{\pi}{2} - (n-1)\gamma\}\supset U_0 $. On $\Omega$, $\bar{u}$ satisfies 
    \begin{equation}\label{roteq}
        \sum_{j=1}^n \arctan\bar{\lambda}_j = \bar{\Theta}(\bar{x},\bar{u}(\bar{x}),D\bar{u}(\bar{x})) := g(\bar{x})    
    \end{equation}
    where the $\bar{\lambda}_j$ are the eigenvalues of $D^2\bar{u}$. As the upward rotation increases the phase, we get
    \[
    g(\bar{x}) = \sum_{j=1}^n \arctan\bar{\lambda}_j = \sum_{j=1}^n \arctan\lambda_j + n\gamma = \Theta(x,u(x),Du(x)) + n\gamma
    \]
    which shows that $g(\bar{x}) \geq (n-2)\pi/2 + \gamma$ in $\Omega$. We then have for $\bar{r} = (\cos\gamma - K\sin\gamma)\rho/2$ where $B_{\bar{r}}\subset\Omega$ and $B_{\rho}\subset U$, that $g\in C^\alpha(B_{\bar{r}})$ with
    \[
    \frac{|g(\bar{x}_1) - g(\bar{x}_2)|}{|\bar{x}_1 - \bar{x}_2|^\alpha}\leq \frac{|\Theta(x_1,u(x_1),Du(x_1) - \Theta(x_2,u(x_2),Du(x_2))|}{|x_1 - x_2|^\alpha}(\cos\gamma - K\sin\gamma)^{-\alpha}.
    \]

    We modify the uniformly elliptic second order equation \eqref{roteq} to a concave one via exponentiation (see \cite[p. 24]{CW2} or \cite[Lemma 2.2]{CPW}). Applying \cite[Cor. 1.3]{CC1}, we get $\bar{u}\in C^{2,\alpha}(B_{\bar{r}/2})$ with the estimate
    \[
    |D^2\bar{u}(\bar{x}) - D^2\bar{u}(0)|\leq C\left(\|\bar{u}\|_{L^\infty(B_{\bar{r}})} + \|e^{-Ag}\|_{C^\alpha(B_{\bar{r}})}\right)|\bar{x}|^\alpha
    \]
    where $C$ depends on $K$ and $n$, and $A$ depends on $\gamma$. We rotate the gradient graph $\{(\bar{x},D\bar{u}(\bar{x})) | \bar{x}\in B_{\bar{r}/2}\}$ back down by an angle of $\gamma$ to the original coordinates:
    \[
    \begin{pmatrix}
        x\\
        y
    \end{pmatrix} = 
    \begin{pmatrix}
        \cos \gamma & \sin \gamma \\
        -\sin\gamma & \cos \gamma
    \end{pmatrix}
    \begin{pmatrix}
        \bar{x}\\
        \bar{y}
    \end{pmatrix}.
    \]
    We then connect the original Hessian to the rotated Hessian using 
    \[
    \frac{dx}{d\bar{x}} = \cos\gamma I_n + \sin \gamma D^2_{\bar{x}}\bar{u}(\bar{x}) = \left(\frac{d\bar{x}}{dx}\right)^{-1}\geq \frac{1}{\cos\gamma + K\sin\gamma}I_n
    \]
    to get, for $x\in B_{r_0}\subset \bar{x}^{-1}(B_{\bar{r}/2})$,
    \[
    |D^2u(x) - D^2u(0)|\leq C\left(\|\bar{u}\|_{L^\infty(B_{\bar{r}})} + \|e^g\|_{C^\alpha(B_{\bar{r}}(0))}\right)(\cos\gamma + K\sin\gamma)^{2+\alpha}|x|^\alpha.
    \]
    We note that by Taylor expanding the exponential we get
    \[
    \|e^{-Ag}\|_{C^\alpha(B_{\bar{r}})}\leq A\|g\|_{C^\alpha(B_{\bar{r}})}.
    \]
    
    Using \eqref{rotpot}, connecting the $C^\alpha$ norms, and bounding $\|\Theta\|_{C^\alpha(B_\rho)}$ as in Proposition \ref{c2alph1}, we get
    \begin{align*}
        |D^2u(x) - D^2u(0)| &\leq C\bigg(\|u\|_{L^\infty(B_R)} + R\|Du\|_{L^\infty(B_R)}\\ 
        &\quad+ R^2/2 + \|Du\|^2_{L^\infty(B_R)}/2 + 2\lambda^\alpha \bar{r} C_\Theta \bigg)|x|^\alpha
    \end{align*}
    where $C_\Theta$ is given by the estimates in \eqref{alphgen}, \eqref{alphtran} \eqref{alphshrink}, or \eqref{alphrot}. \end{proof}
    
   \begin{remark}\label{C2a}
       We mention another approach. One can show that the second fundamental form of the level set is uniformly convex for $n>2$ following the proof in \cite{YY0} with strict inequalities. From there one can exponentiate as in \eqref{dex} where we choose a basis of vectors given by the $(n-1)$ tangential directions and one normal direction to the level set. As the $(n-1)\times(n-1)$ upper left block matrix is the second fundamental form, it is positive definite by the uniform convexity of the level set. Choosing $A$ large enough in the remaining lower right block matrix makes the whole matrix positive definite.
        \end{remark} 

\bibliographystyle{amsalpha}
\bibliography{main}

\end{document}